\journal{Transportation research Part B: Methodological}
\begin{document}
\newtheorem{theorem}{Theorem}
\newtheorem{lemma}{Lemma}
\newtheorem{remark}{Remark}
\newtheorem{proposition}{Proposition}
\newtheorem{property}{Property}
\newtheorem*{proof}{Proof}
\newtheorem{example}{Example}

\newcommand{\ReviseColor}{gray}
\newcommand{\tarr}[2]{t^{\textnormal{arrive}}_{#1, #2}}
\newcommand{\platime}[1]{T_{#1}}
\newcommand{\toff}[1]{t^{\textnormal{off}}_{#1}}
\newcommand{\ttravel}[2]{t^{\textnormal{travel}}_{#1, #2}}
\newcommand{\qmax}{q^{\textnormal{max}}}
\newcommand{\gmax}[1]{g^{\textnormal{max}}_#1}
\newcommand{\firsthw}[1]{\underline{\tau}_#1}
\newcommand{\secondhw}[1]{\overline{\tau}_#1}
\newcommand{\tadjub}{\overline{t}^{\textnormal{adjust}}}
\newcommand{\tadjlb}{\underline{t}^{\textnormal{adjust}}}
\newcommand{\p}{\mathbf{p}}
\newcommand{\Lmax}[1]{L^{\textnormal{max}}_#1}
\newcommand{\Lflow}[1]{L^{\textnormal{flow}}_#1}
\newcommand{\Cmin}{\tau^{*}}
\newcommand{\delay}[1]{t^{\textnormal{delay}}_#1}
\newcommand{\moveFirstN}[1]{\underline{n}_#1}
\newcommand{\tarrLB}[2]{\underline{t}^{\textnormal{arrive}}_{#1, #2}}
\newcommand{\tarrUB}[2]{\overline{t}^{\textnormal{arrive}}_{#1, #2}}
\newcommand{\platimeLB}[1]{\underline{T}_{#1}}
\newcommand{\platimeUB}[1]{\overline{T}_{#1}}
\newcommand{\linearvarXunder}[3]{\underline{x}_{#1, #2, #3}}
\newcommand{\linearvarXover}[3]{\overline{x}_{#1, #2, #3}}
\newcommand{\linearvarYunder}[3]{\underline{y}_{#1, #2, #3}}
\newcommand{\linearvarYover}[3]{\overline{y}_{#1, #2, #3}}
\newcommand{\TotalDemand}{q^{\text{total}}}
\newcommand{\ObjSwitchCoef}{\rho}
\newcommand{\LostTime}{t^{\textnormal{lost}}}

\allowdisplaybreaks\allowdisplaybreaks

\begin{frontmatter}



	\title{Cyclic Modulation Control of Multi-Conflict Connected Automated Traffic}


	\affiliation[inst1]{organization={Department of Civil and Environmental Engineering, Texas A\&M University},
		postcode={77843},
		state={Texas},
		country={United States}}
        \affiliation[inst3]{organization={Department of Mechanical Engineering, Texas A\&M University},
		postcode={77843},
		state={Texas},
		country={United States}}

        \author[inst1]{Fan Pu}

        \author[inst1]{Zihao Li}
        
        \author[inst3]{Sivakumar Rathinam}
        
        \author[inst3]{Minghui Zheng}

	\author[inst1]{Yang Zhou\corref{cor1}}

        \cortext[cor1]{Corresponding author: Yang Zhou. Email: yangzhou295@tamu.edu}

	\begin{abstract}
        Multi-conflict traffic is ubiquitous. Connected Automated Vehicles (CAVs) offer unprecedented opportunities to enhance safety, reduce emissions, and increase throughput through precise coordination and automation. However, existing CAV strategies remain confined to specialized scenarios, such as highway on-ramp merging or single-lane roundabouts, and traditional traffic signals sacrifice efficiency for safety via rigid phasing and all-red intervals. In this paper, we present Cyclic Modulation Control of Multi-Conflict Connected Automated Traffic (CMAT), a unified, geometry-agnostic framework that embeds each conflict point into a repeating sequence of “micro-phases”. Vehicles dynamically form platoons with demand-responsive sizes and negotiate time slots for occupying conflict points, enabling collision-free traversal and high intersection utilization. CMAT aims to minimize delay, guarantee safety, and accommodate arbitrary merging, diverging, and crossing patterns without manual retuning. We formalize CMAT as a mixed-integer linear programming model constructed on a directed graph abstracted from the physical intersection layout. The performance of CMAT is evaluated across a suite of multi-conflict tests, including simple two-way crossings, four-leg intersections, complex connected intersections. The results demonstrate substantial reductions in delay and significant throughput improvements compared with state-of-the-art CAV coordination methods and traditional signal timing strategies.

	\end{abstract}


	\begin{keyword}
		 Connected automated vehicle \sep Multi-conflict traffic\sep Cyclic modulation \sep Demand responsive control  \end{keyword}

\end{frontmatter}

\section{Introduction}
The advent of Connected Automated Vehicles (CAVs) promises transformative gains in roadway safety, emissions reduction, and network efficiency by enabling cooperative, fine-grained control. Substantial progress has been made on fundamental driving tasks—including car-following~\citep{zhou2020stabilizing,ma2017parsimonious,guo2019joint, wang2015game, li2024enhancing}, lane keeping~\citep{yi20232d, liu2020lateral, jing2025efficient}, and lane changing~\citep{yi2024cooperative, duan2023cooperative, jing2025efficient, luo2016dynamic}. Yet the full potential of CAVs remains unrealized in environments where vehicle paths intersect, merge, or diverge. Such multi-conflict scenarios, common at roundabouts, ramps, weaving segments, and urban intersections, are safety-critical and frequently determine corridor and network throughput~\citep{chen2018capacity}.

Existing CAV coordination strategies generally address individual conflicts in isolation. Sequencing-based controllers developed for specific geometries such as highway ramps~\citep{li2023sequencing, chen2021connected, liu2023safety, hou2023cooperative} or single-lane roundabouts~\citep{martin2021traffic, naderi2023controlling, li2025virtual} typically rely on FIFO rules, mixed-integer programming, or virtual car-following~\citep{xu2018distributed, chen2022conflict} and space reservations~\citep{li2013modeling, levin2016optimizing, liu2023safety}. While effective locally, these controllers are tightly coupled to assumed layouts and conflict patterns~\citep{zhu2022merging}, limiting their scalability to intersections where merging, diverging, and crossing maneuvers coexist~\citep{wu2022intersection, li2022review}. Likewise, AI- and optimization-based intersection controllers~\citep{fayazi2017optimal, yu2018integrated, fayazi2018mixed, antonio2022multi, li2023intersection} improve local sequencing but provide little insight into the global flow-level principles governing stability and coordination. Their reliance on scenario-specific assumptions, training conditions, or geometric simplifications often hinders generalization across layouts and demand patterns.

A key limitation across these methods is their focus on individual vehicle trajectories rather than the collective flow behavior that emerges across interacting maneuvers. Maneuver-specific time headways differ substantially, e.g., merging requires larger headways than standard car-following, and the failure to account for such heterogeneity can reduce effective capacity and trigger oversaturation under high or imbalanced demand~\citep{talebpour2016influence, montanino2021string}. Rhythmic control~\citep{lin2021rhythmic, chen2021rhythmic} provides a valuable step toward decentralized coordination by assigning repetitive movement-specific rhythms, but its fixed patterns cannot adapt to real-time variations in arrival processes or maneuver characteristics. These challenges indicate the need for a flow-level, demand-responsive perspective that explicitly captures how conflict structures, maneuver headways, and arrival variability jointly shape network performance.

On the other hand, traditional traffic signals embody this flow-level perspective by aggregating vehicles into phases and separating conflicts in time. However, fixed phasing, conservative clearance intervals, and mandatory all-red times constrain their ability to exploit CAV connectivity and real-time cooperation~\citep{gartner1991multi, gallivan1988optimising, osorio2015urban, lu2022autonomous}. Signal-bound CAV trajectory planning~\citep{yao2020decentralized, wang2022connected, guo2019joint, yao2023rolling, wu2021cooperative} offers incremental improvements but remains confined to predefined phases and cycle lengths. The growing tension between microscopic CAV capability and macroscopic signal rigidity underscores the need for a unified control paradigm that retains the scalability of flow aggregation while harnessing the adaptability of continuous CAV coordination.

To address these gaps, this paper introduces Cyclic Modulation Control of Multi-Conflict Connected Automated Traffic (CMAT), a unified, geometry-agnostic framework that synthesizes flow-level coordination with CAV-enabled real-time operation. CMAT replaces fixed signal phases with demand-responsive conflict cycles in which CAVs are grouped into platoons whose sizes adapt to prevailing arrivals. Within each unified cycle, right-of-way is allocated among competing movements through synchronized platoon-level slot negotiation, ensuring safe headway separation without the capacity loss of all-red intervals. By abstracting arbitrary conflict structures into a directed graph, CMAT accommodates diverse merging, diverging, and crossing geometries without requiring grid alignment, convexification of road boundaries, or other geometry-dependent adaptations.


CMAT offers three key advantages. (1) Demand responsive: Platoon sizes adjust automatically to movement-specific and time-varying arrival patterns, enabling capacity-maximizing allocation of right-of-way.
(2) Geometry and conflict adaptive: A general conflict-graph representation permits application to arbitrary layouts without geometric modification or restrictive assumptions on lane alignment and shape.
(3) Scalable and extensible: By coordinating interactions at the platoon level, CMAT converts microscopic trajectory scheduling into mesoscopic flow coordination, offering scalability and straightforward deployment across intersections and transportation networks of varying size and complexity.

The remainder of this paper is organized as follows. \autoref{sec:problem} presents a motivating example for CMAT. \autoref{sec:math} describes the problem and introduces the mathematical formulation. \autoref{sec:exps} reports numerical simulations and analyzes traffic-flow implications. \autoref{sec:conc} concludes with findings and directions for future research.

\section{Motivating Example}\label{sec:problem}
Managing fully automated traffic at general intersections fundamentally involves resolving right-of-way conflicts arising from spatial and temporal overlaps among vehicle trajectories. Achieving desired service levels, such as a target total throughput, adds further complexity. The proposed CMAT framework addresses these challenges through a design grounded in traffic flow theory.

For a given intersection, the achievable throughput $q$ depends on how frequently vehicles can safely enter the conflict area and the speed $v$ at which they pass through:
\begin{equation}
    q = \frac{1}{\alpha\tau_c + (1 - \alpha)\tau_f + l/v},
\end{equation}
where $\alpha \in [0,1]$ denotes the fraction of time vehicles encounter crossing conflicts, and $l$ is the average vehicle length. The parameters $\tau_c$ and $\tau_f$ are the upper and lower bounds of admissible CAV time headways: $\tau_c$ corresponds to the safety headway required for crossing conflicts, while $\tau_f$ represents car-following headway at free-flow speed $v_f$. Since crossing collisions are more severe than car-following interactions, it follows that $\tau_{c} > \tau_{f}$, and the average headway is a convex combination of the two.

Two design principles follow directly from this relationship. First, as $\alpha$ grows, indicating more frequent crossing interactions, the effective headway $\alpha\tau_c + (1-\alpha)\tau_f$ increases and throughput $q$ declines. This provides a clear mathematical rationale for platooning: grouping vehicles reduces the frequency of crossing conflict events (i.e., lower $\alpha$), allowing most vehicles to operate under the smaller car-following gap $\tau_f$ rather than the larger crossing conflict gap $\tau_c$. Second, throughput increases with travel speed and reaches its maximum when $v=v_f$.

Combining these insights motivates the design of a ``phantom'' micro-signal mechanism: instead of a traditional red-green phase structure, each movement intermittently forms small platoons from random arrivals, releases platoons to pass the intersection following the free-flow speed. By doing so, the system sustains a lower average headway while eliminating unnecessary deceleration cycles. This achieves both efficient headway structure and intuitively smoother, predictable, and conflict-aware coordination.

\autoref{fig:motiv-single} illustrates the operational benefits of ``phantom'' micro-signal in a single-conflict setting. The left and right sides show the traditional stop-sign control and the proposed CMAT strategy, respectively. It is evident that CMAT improves intersection efficiency by reducing costly crossing-conflict negotiations and making car-following headways the dominant interaction mode. As a result, intersections can accommodate substantially higher traffic throughput, an effect that becomes particularly valuable under high-demand conditions.

\begin{figure}[htbp]
    \centering
    \includegraphics[width=0.6\linewidth]{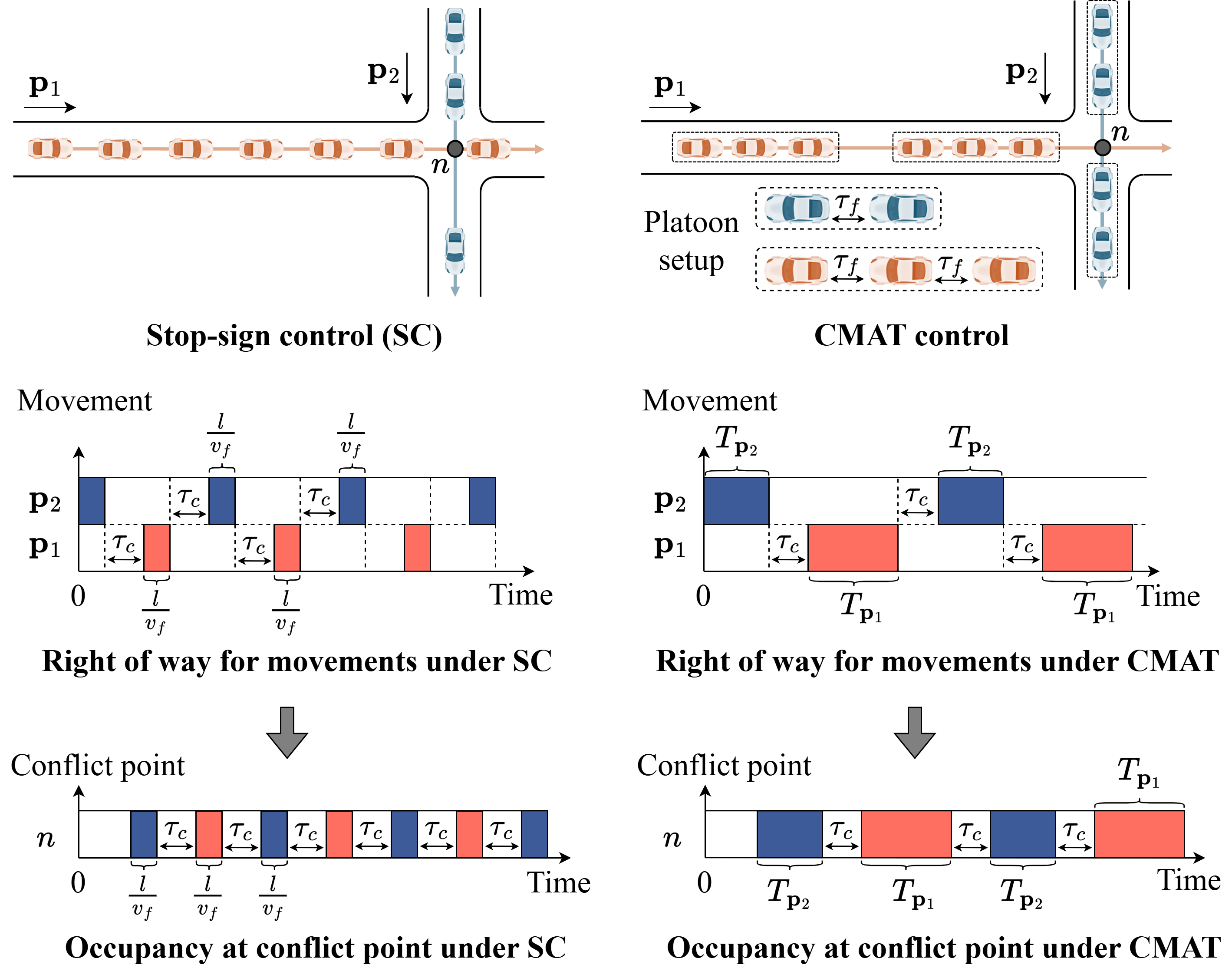}
    \caption{Comparison of control strategies in the single-conflict scenario}
    \label{fig:motiv-single}
\end{figure}

To further quantify the outcome, we consider the same single-conflict scenario. Under platooning, only the leading vehicle of each platoon follows the crossing headway $\tau_f$. If both movements form platoons with average size $N$, the effective conflict frequency $\alpha = \frac{2}{2+2(N-1)} = \frac{1}{N}$, since $2N$ pairs of vehicle interactions happen at the conflict point with two crossing headways incurred by leading vehicles and remaining $2(N-1)$ intra-platoon car-following interactions. Therefore,
\begin{equation}\label{eq:moti-q}
    q(\alpha)=\frac{1}{\alpha\tau_c + (1 - \alpha)\tau_f + l/v_f},\qquad q(N) = \frac{1}{\tau_f + \frac{1}{N}(\tau_c-\tau_f) + l/v_f},
\end{equation}
which increases monotonically in $N$ and is bounded above by $1/(\tau_f + l/v_f)$ as $N \to \infty$ or $\alpha\to 0$. 

The blue curve in \autoref{fig:moti-func}(a) shows the maximum capacity $q(\alpha)$ for different values of $\alpha$. The upper and lower bounds, $\overline{q}$ and $\underline{q}$, represent the ideal and worst-case conditions in which all vehicles follow $\tau_f$ or $\tau_c$, respectively. The area between $q(\alpha)$ and $\underline{q}$ represents the full range of throughput levels that can be achieved for a given $\alpha$ configuration. For any average demand $\TotalDemand$, points that fall within the shaded region indicate feasible operating conditions that can fully serve that demand, which is the operating range CMAT is designed to achieve.

\begin{figure}[htbp]
    \centering
    \includegraphics[width=1\linewidth]{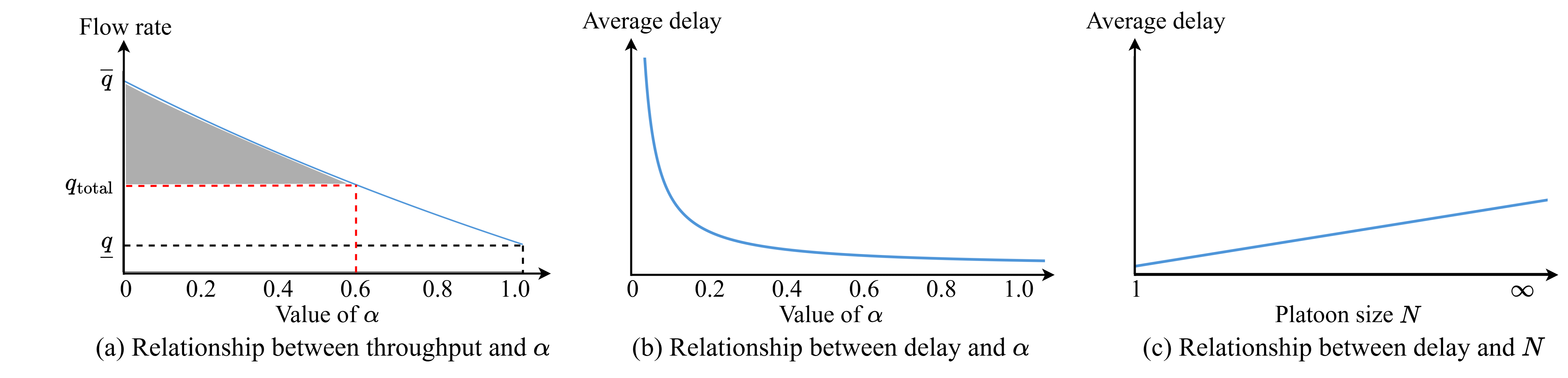}
    \caption{Illustration of functions used in the motivating example}
    \label{fig:moti-func}
    \captionsetup{justification=centering}
\end{figure}

We next examine the delay implications of platooning under the uniform arrival condition. Let $\{X_k\}_{k\ge 1}$ be the time intervals between the ($k-1$)-th and $k$-th arrivals on the same movement, which are independent and identically distributed (i.i.d.) with a finite mean $\mu:=\mathbb{E}[X_1]$. Define the arrival epochs $\{T_N\}$ in the usual renewal way:
\begin{equation}
    T_0=0,\quad T_N= \sum_{k=1}^{N} X_k, \quad N\ge 1
\end{equation}

So $T_N$ is the time of the $N$-th vehicle's arrival, which is also the time to accumulate a platoon of size $N$. If the platoon is immediately released at $T_N$, the $i$-th vehicle ($i=1,\dots,N$) experienced waiting time
\begin{equation}
    W_i = T_N - T_{i} = \sum_{k=i+1}^{N} X_k
\end{equation}

Therefore, the expected waiting time of the $i$-th vehicle is
\begin{equation}
    \mathbb{E}[W_i] = \sum_{k=i+1}^{N} \mathbb{E}[X_k] = (N - i)\mu,
\end{equation}
and the average delay per vehicle in the platoon is
\begin{equation}
    \bar{W}(N) = \frac{1}{N}\sum_{i=1}^{N} \mathbb{E}[W_i]=\frac{N-1}{2} \mu, \qquad \bar{W}(\alpha)=\frac{1-\alpha}{2\alpha}\mu,
\end{equation}
which as shown in \autoref{fig:moti-func}(b) and \autoref{fig:moti-func}(c), the average delay $\bar{W}(N)$ grows linearly when increasing $N$.

In summary, when $\alpha$ decrease (or N increase), the capacity is improved via smaller effective headway but also increases average waiting time linearly in $N$. CMAT should adaptively choose $N$ (equivalently $\alpha$) to find the ideal trade-off between the capacity gain in $q(N)$ and the increasing delay $\bar{W}(N)$ (i.e., $q(N)$ satisfy the traffic demand with minimum $N$).

Based on the above analysis, extending CMAT to more general multi-conflict intersections introduces new challenges. While platooning can increase throughput and interactions at free-flow speed can reduce delay locally, these benefits may diminish without coordination across all conflict points. Two interdependent design components therefore become essential:
(i) \textit{Demand grouping}: forming platoons of appropriate size on each movement so that arrival demand is met;
(ii) \textit{Platoon sequencing}: determining, at each conflict point, the passing order and offset (timing) of platoons from competing movements.

\begin{figure}[htbp]
\centering
\includegraphics[width=1\linewidth]{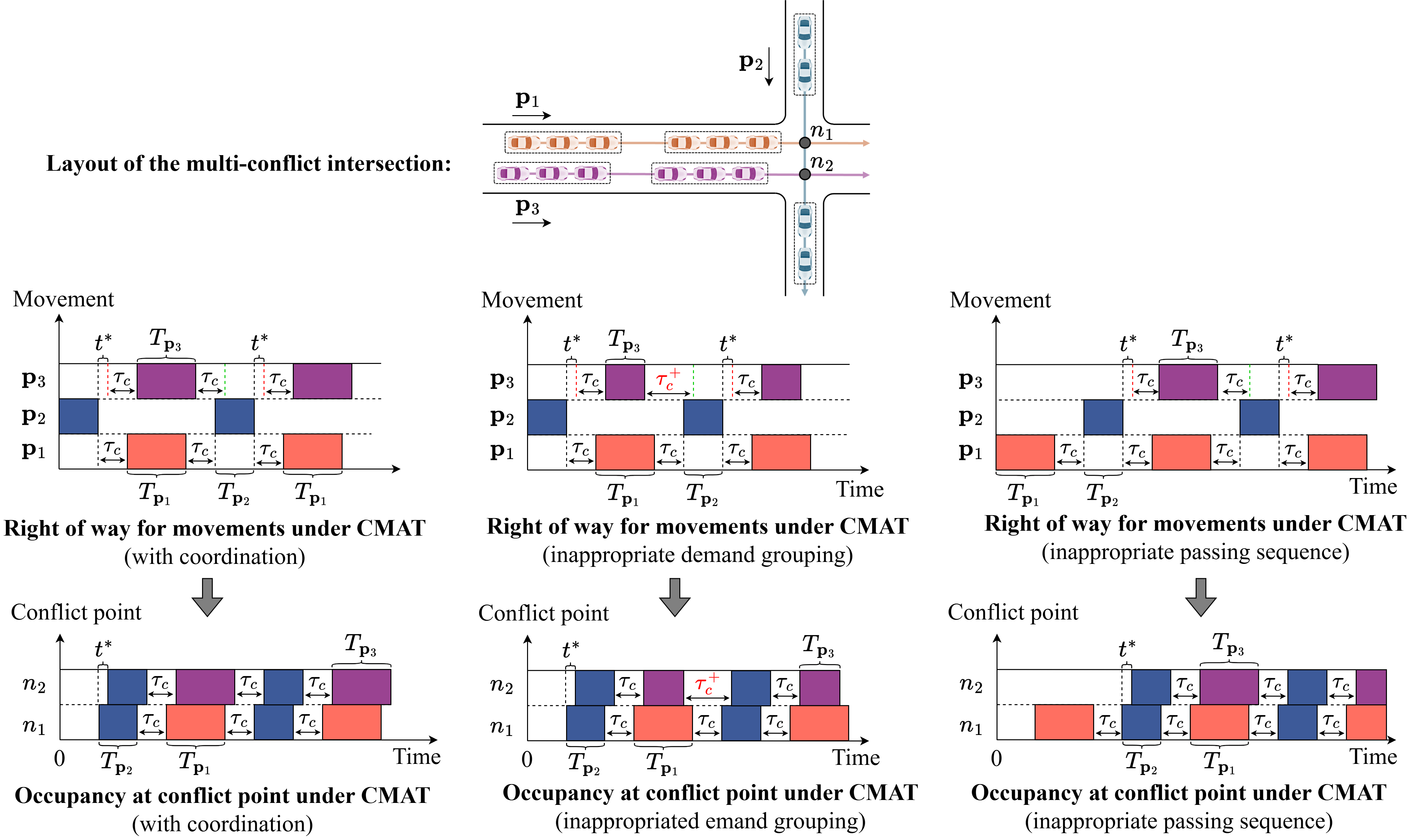}
\caption{Solution comparison in the multi-conflict scenario.}
\label{fig:motiv-multi-solution}
\end{figure}

To illustrate the importance, \autoref{fig:motiv-multi-solution} shows a three-movement intersection with two conflict points. Due to the geometry offset, let $t^*$ denote the travel time for a vehicle to move from $n_1$ to $n_2$ at speed $v_f$. In the left panel, properly coordinated grouping and sequencing yield more efficient operation that improves the usage of both conflict points. The middle panel highlights the capacity loss caused by improper demand grouping, where mismatched platoon sizes lead to excess headway $\tau_c^+>\tau_c$. In the right panel, poor sequencing causes a substantial delay for the platoon on movement $\p_3$, since it must wait for the platoon on movement $\p_2$ to clear point $n_2$.

This analysis highlights the need for a holistic framework that accounts for both local conflict resolution and system-level coordination. CMAT addresses this by jointly optimizing platoon sizing, platoon sequencing, and time offsets for the ``phantom'' micro-signals across the entire intersection. This ensures conflict-free operation while maintaining an effective balance between throughput improvement and delay performance.

\section{Problem description and mathematical formulation}\label{sec:math}
In this section, we formally describe CMAT and present a mixed-integer linear formulation. CMAT is a geometry-adaptive, conflict-resolution, and demand-responsive approach to jointly platooning CAVs and coordinating their right-of-ways within intersections. The main assumptions of CMAT are as follows:
\begin{itemize}
    \item origin-destination is fully known
    \item a fully CAV environment is required for precise trajectory regulation;
    \item vehicles within the control zone travel at a unified constant cruising speed;
    \item lane within control area are prohibited to eliminate additional conflicts;
    \item shared through–left-turn lanes are not allowed, which simplifies conflict resolution.
\end{itemize}

CMAT is designed to adapt to general intersections without imposing geometric constraints by abstracting the physical intersection layout into a directed graph, commonly adopted in network flow problems \citep{yin2023integrated,pu5019798optimal}, where the essential geometric characteristics are preserved: nodes represent conflict points, while arcs denote the connections between these points and preserve the corresponding lengths of road segments.
To introduce the basic concepts and terminology employed in CMAT, we use a general four-leg intersection shown in \autoref{fig:prob_desc} as an example. Within the intersection, a \emph{conflict point} refers to a physical location where the trajectories of two or more vehicle movements intersect, merge, or diverge. These points represent potential collision locations in the absence of proper right-of-way control.
In CMAT, the \emph{control area} is a region within the intersection that encompasses all conflict points. Within the control area, a traffic management system actively coordinates CAV movements to ensure safe and efficient passage through potential conflicts.

The computational complexity of intersection control largely stems from the detailed vehicle dynamics that occur within the control area. To reduce this burden, many existing studies simplify the problem by disregarding acceleration and deceleration behaviors and instead assume that vehicles traverse the intersection at a constant speed \citep{au2015autonomous, chen2021rhythmic, wu2022autonomous, niels2024optimization}. This assumption not only creates a more predictable operating environment but also increases capacity, as higher operating speeds directly improve throughput (see \autoref{eq:moti-q}).

To support this constant-speed assumption, CMAT requires all CAVs to enter the control area at the free-flow speed $v_f$. This is achieved through an upstream adjustment (acceleration) zone, typically about 100 m in length \citep{yu2019managing}, where vehicles adjust to $v_f$ before entering the controlled region. The adjustment zone plays three crucial roles: (i) speed synchronization reduces crash risk by eliminating large speed differentials near the intersection, (ii) operating at $v_f$ increases achievable discharge flow and thus raises intersection throughput, and (iii) enforcing a homogeneous speed profile suppresses disturbance formation, thereby improving the robustness and stability of the overall system.

In addition, rather than imposing a tight operational bottleneck directly at the conflict points, where geometric constraints are most restrictive, the adjustment zone effectively shifts part of this bottleneck upstream. This spatial redistribution alleviates pressure at the conflict points and enables the system to balance throughput limitations more flexibly across space. For modeling simplicity, consistent with \cite{chen2021rhythmic}, the travel time within the adjustment zone is not explicitly included in the scheduling formulation. Since platooning has been shown to significantly improve capacity, CMAT equips the entrance of the adjustment area with a ``phantom'' micro-signal that assigns platoon sizes for each origin–destination movement based on arrival demands. This mechanism enables the control zone to operate at the platoon level rather than managing interactions among individual vehicles.

\begin{figure}[tbp]
    \centering
    \includegraphics[width=0.55\linewidth]{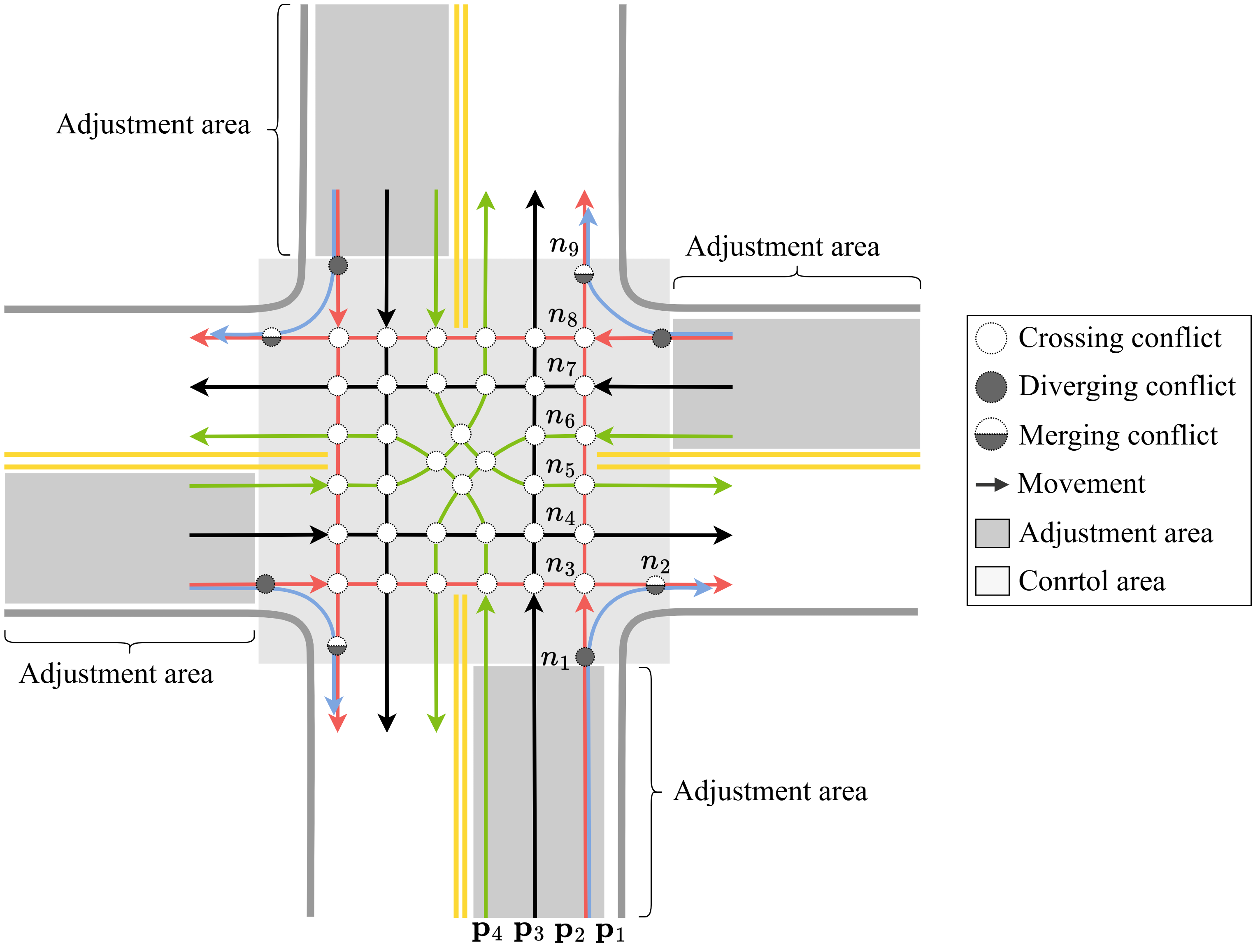}
    \caption{Illustration of the CMAT problem for a four-leg intersection (colored arrows indicate different movements.)}
    \label{fig:prob_desc}
    \captionsetup{justification=centering}
\end{figure}

We now present the mathematical formulation of CMAT. Let $\mathbf{G} = (\mathbf{N}, \mathbf{A})$ denote a directed graph representing the control area, where $\mathbf{N}$ is the set of conflict points and $\mathbf{A}$ is the set of directed arcs that connect adjacent conflict points, with each arc oriented in the direction of vehicle movement.
We define a predetermined set of origin-destination (OD) movements $\mathbf{P}= \{\p_1,\p_2, \dots, \p_m\}$, where each OD-movement $\p\in\mathbf{P}$, composed of a sequence of conflict points, corresponds to the vehicle trajectory of a specific OD. For instance, the intersection illustrated in \autoref{fig:prob_desc} comprises 16 movements in total, with four movements for each approach. As an example, the right-turn movement $\p_1$ and through movement $\p_2$ are defined as $\{n_1,n_2\}$ and $\{n_1, n_3,n_4,\cdots,n_8,n_9\}$, respectively.

The formulation of CMAT includes the following input parameters:
\begin{itemize}[itemsep=0pt, parsep=0pt, topsep=0pt, partopsep=0pt]
\item $v_{f}$: free-flow speed (m/s);
\item $l$: length of a single vehicle (m);
\item $\tau_{f}$: rear-to-front bumper car-following headway (s) at speed $v_f$;
\item $\tau_c$: minimum rear-to-front bumper headway (s) for merge, diverge, and crossing conflicts;
\item $q^{\max}$: maximum (saturation) flow rate (veh/s), computed as $\frac{1}{\tau_{f} + l / v_{f}}$;
\item $q_{\p}$: flow rate (veh/s) corresponding to movement $\p$;
\item $\ttravel{\p}{n}$: travel time (s) from the first conflict point of movement $\p$ to node $n \in \p$ at free-flow speed $v_f$.
\item $\mathbf{P}_n$: set of movements that share the conflict point $n$.
\item $\Cmin$: the threshold vehicle arrival headway (s) to check whether the flow rate is sufficiently large.
\end{itemize}

Since all CAVs within the control zone maintain the free-flow speed $v_f$, the travel time $\ttravel{\p}{n}$ can be directly computed as the length of the corresponding arc divided by $v_f$. Because shared through/right-turn lanes are not considered, each conflict point is associated with exactly two distinct movements, that is, $|\mathbf{P}_n| = 2$. This assumption simplifies platoon sequencing at each conflict point to a binary choice between the two movements.

\begin{table}[htbp]
\centering
\small
\caption{Decision variables (``Cont.'', ``Bin.'' and ``Int.'' denote continuous, binary, and integer variables, respectively)}
\label{tab:decision-vars}
\begin{tabular}{llp{13cm}}
\hline
Symbol & Type & Description \\ \hline
$C$ & Cont. & The unified cycle length (s) \\
$r_{\p}$ & Cont. & Red time (s) for movement $\p$ in one cycle \\
$g_{\p}$ & Cont. & Green time (s) to clear queue for movement $\p$ in one cycle at rate $\qmax$ \\
$L_{\p}$ & Int. & Platoon size of movement $\p$ \\
$\toff{\p}$ & Cont. & Time offset to start the first cycle on movement $\p$ \\
$\platime{\p}$ & Cont. & Duration (s) a platoon on movement $\p$ occupies a conflict point while traveling at $v_f$ \\
$\firsthw{n}$ & Cont. & The first departure-arrival headway (s) between platoons at conflict point $n$ in one cycle \\
$\secondhw{n}$ & Cont. & The second departure-arrival headway (s) between platoons at conflict point $n$ in one cycle \\
$\tarr{\p}{n}$ & Cont. & Time slot at which the platoon of movement $\p$ initially arrives at conflict point $n$ \\
$z_{\p_1,\p_2,n}$ & Bin. & For movements $\{\p_1, \p_2\}=\mathbf{P}_{n}$ at conflict point $n$, equals 1 if CAVs on $\p_1$ passes $n$ before those on $\p_2$ within the cycle, and 0 otherwise \\
\hline
\end{tabular}
\end{table}

We summarize the decision variables in \autoref{tab:decision-vars}. To aid interpretation, \autoref{fig:decision_vars} visually illustrates the physical meaning of these variables using a staggered T-intersection with two conflict points $n_1$ and $n_2$. For demonstration purposes, the platoon sizes in \autoref{fig:decision_vars} are pre-given as $L_{\p_1} = 2$, $L_{\p_2} = 3$, and $L_{\p_3} = 2$. The platoons on movements $\p_2$ and $\p_3$ are sequenced to pass through conflict points before those on movement $\p_1$; that is, $z_{\p_2,\p_1,n_1} = 1$ and $z_{\p_3,\p_1,n_2} = 1$.

\begin{figure}[htbp]
    \centering
    \includegraphics[width=0.95\linewidth]{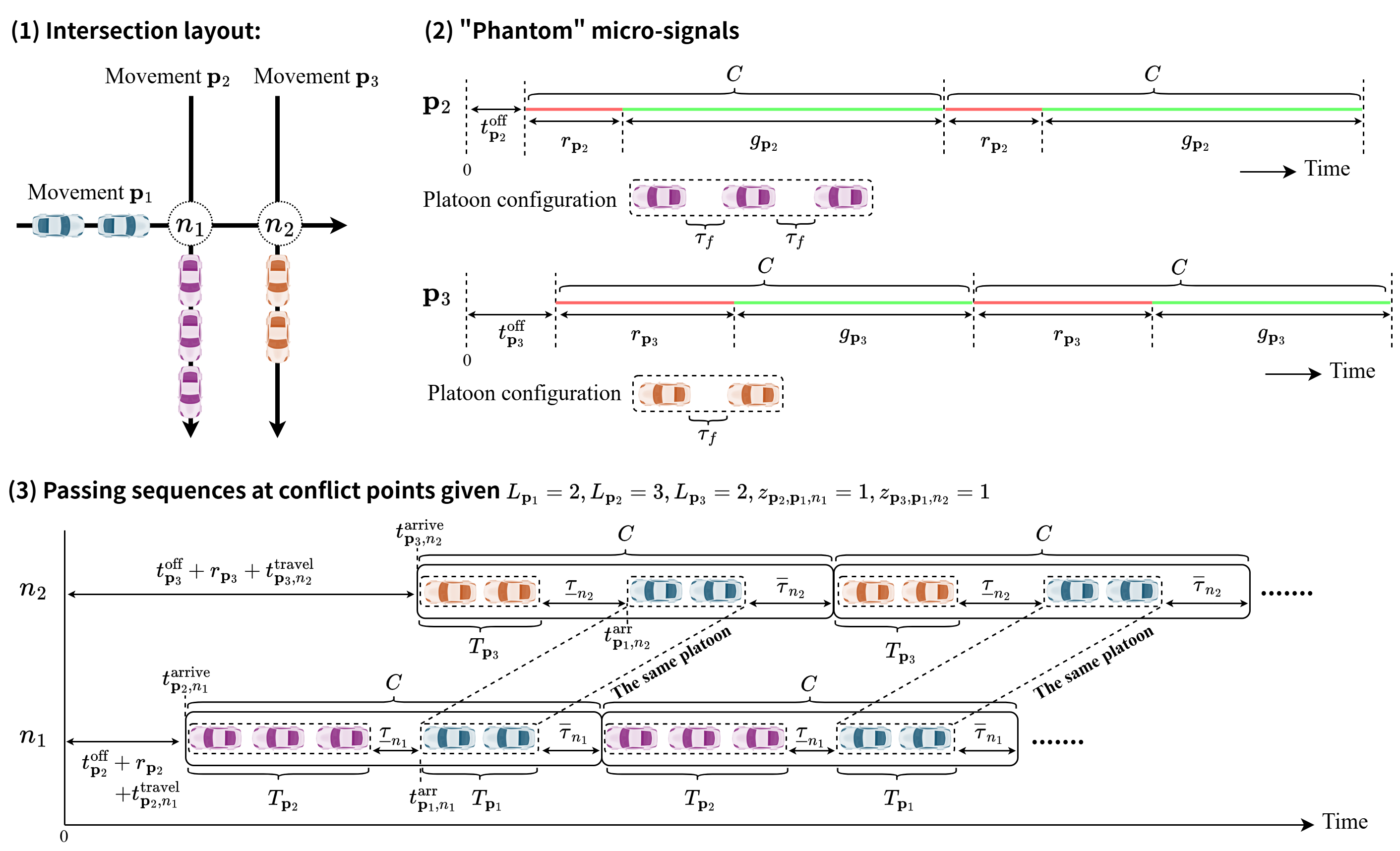}
    \caption{Illustration of decision variables}
    \label{fig:decision_vars}
    \captionsetup{justification=centering}
\end{figure}

As shown in \autoref{fig:decision_vars}, each movement equips a ``phantom'' micro-signal that operates on a unified cycle $C$, for the purpose of traffic coordination in a repetitive manner, consisting of a red interval $r_\p$ and a green interval $g_\p$. The red interval is used to accumulate a desired number of CAVs to form a platoon, while the green interval allows the platoon to discharge at the maximum rate $\qmax$. Therefore, the platoon size is determined by $L_\p=\qmax g_\p$. Each platoon then travels at the free-flow speed $v_f$ and maintains an intra-platoon car-following headway of $\tau_f$ until it exits the control zone. Moreover, to prevent potential conflicts, each movement is assigned a time offset $\toff{\p}$ that delays the opening of its ``phantom'' micro-signal.

Because CAV platoons travel at a constant speed $v_f$ within the control area, their arrival and occupancy times at each conflict point can be explicitly determined from the decision variables. Since vehicles are released immediately after the red interval ends, the arrival time of the first platoon from movement $\p$ at conflict point $n$ is $\tarr{\p}{n}=\toff{\p} + r_\p + \ttravel{\p}{n}$. Given the platoon size $L_\p$, the total time that the platoon occupies a conflict point is denoted by $T_\p$. Because the cycle repeats over time and each conflict point is alternately occupied by two successive platoons from different movements, we use $\firsthw{n}$ and $\secondhw{n}$ to represent the temporal offsets between the departure of one platoon and the arrival of the next at conflict point $n$.

Based on the above, the mixed-integer formulation for CMAT is stated as follows, where $\lambda\in[0.5,1]$ is a given weighted factor.
It is also worth noting that, although the illustrative examples in \autoref{fig:prob_desc} and \autoref{fig:decision_vars} depict isolated intersections, the proposed formulation is not limited to such settings. Rather, it naturally extends to multi-connected intersections, as the model regulates CAV traffic in a movement-based manner and therefore does not rely on any physical separation between intersections. Consequently, for any given set of estimated OD-pair demands and a representation of the intersection layout as a directed graph $\mathbf{G}$, the proposed formulation is valid for solving the CMAT problem. Numerical experiments are presented later to further validate the effectiveness of the CMAT formulation in multi-connected intersection scenarios.
\begin{subequations}\label{m:min_cycle}
{
\begin{align}
    [\textbf{M1}]~ &\min~ f_1=\lambda C + (1-\lambda)\sum_{\p\in\mathbf{P}} - L_\p \label{obj:f1} \\
    \text{s.t.}
    &\quad C = \firsthw{n} + \secondhw{n} + \platime{\p_1} + \platime{\p_2}, & \forall n\in\mathbf{N},~\p_1,\p_2\in\mathbf{P}_n:\p_1\neq \p_2 \label{c:c-length-conf} \\
    &\quad C = r_{\p} + g_{\p}, & \forall \p\in\mathbf{P} \label{c:c-length-move} \\
    &\quad
    \begin{aligned}\label{c:platoon-size}
        L_\p = 
        \left\{
        \begin{aligned}
           & \qmax g_{\p},  && \text{if } 1/q_\p \le \Cmin \\
           & 1,  && \text{else}
        \end{aligned} \right.
    \end{aligned}, & \forall \p\in\mathbf{P} \\
    &\quad q_{\p}(r_{\p} + g_{\p}) - L_\p = 0, & \forall \p\in\mathbf{P},~\text{if}~1/q_\p \le \Cmin \label{c:serv-rate} \\
    &\quad \platime{\p} = (L_\p -1) \tau_f + L_\p\frac{l}{v_f}, & \forall \p\in\mathbf{P} \label{c:platoon-time} \\
    &\quad \tarr{\p}{n} = \toff{\p} + r_\p + \ttravel{\p}{n}, & \forall \mathbf{\p}\in\mathbf{P},~n\in\mathbf{\p} \label{c:arrive-time} \\
    &\quad
    \begin{aligned}\label{c:headway}
        \firsthw{n}=
        \left\{
        \begin{aligned}
           & \tarr{\p_2}{n} - \tarr{\p_1}{n} - \platime{\p_1},  && \text{if } z_{\p_1,\p_2,n} = 1 \\
           & \tarr{\p_1}{n} - \tarr{\p_2}{n} - \platime{\p_2},  && \text{else}
        \end{aligned} \right.
    \end{aligned}, & \forall n\in\mathbf{N},~\p_1,\p_2\in\mathbf{P}_n:\p_1\neq \p_2 \\
    &\quad \tarr{\p}{n}\ge 0, & \forall \p\in\mathbf{P},~n\in \p \label{c:bound1} \\
    &\quad \firsthw{n}, \secondhw{n} \ge\tau_c, \quad 0\le C \le \overline{C}, \quad z_{\p_1,\p_2,n}\in\{0,~1\} \label{c:bound2} \\
    &\quad r_\p, g_{\p}, T_\p \ge 0, \quad L_{\p}\in\mathbb{Z}^+, & \forall \p\in\mathbf{P} \label{c:bound3}
\end{align}
}
\end{subequations}

The objective function (\ref{obj:f1}) is designed to simultaneously minimize the cycle length (as a surrogate for total delay) and maximize the total throughput (the sum of platoon sizes).
Constraints (\ref{c:c-length-conf}) ensure that cycle length $C$ equals the sum of two headway intervals together with the total occupancy times of all traversing platoons. Meanwhile, constraints (\ref{c:c-length-move}) align the cycle length of the ``phantom'' micro-signal with one red and one green interval.

Constraints (\ref{c:platoon-size}) determine the platoon size for each movement. The condition $1/q_\p\le\Cmin$ indicates that the headway $1/q_\p$ between consecutive arrivals on movement $\p$ does not exceed the threshold $\Cmin$. When this condition is violated, the demand on movement $\p$ is effectively negligible because its flow rate is too low to accumulate even a single vehicle within the cycle length $\Cmin$. In such cases, enforcing $L_p=\qmax g_\p$ with integer-valued platoon sizes would force $g_p$ (and thus the cycle length) to become unreasonably large. To avoid this, the model disregards the arrival demand of movement $\p$ (we say that movement $\p$ is \emph{muted}) and instead assumes that a hypothetical CAV ($L_\p=1$) is released every cycle. This reservation of one time slot per cycle allows a single CAV to be discharged without introducing conflicts or inflating the cycle length.

Constraints \eqref{c:serv-rate} guarantee that any queue formed during the red interval is fully cleared by the end of the cycle.
Constraints (\ref{c:platoon-time}) define the time interval that platoon on movement $\p$ occupies each conflict point. Constraints (\ref{c:arrive-time}) determine the time that the platoon on movement $\p$ arrives at conflict point $n$. Constraints (\ref{c:headway}) link the departure–arrival headway at conflict point $n$ to the sequencing of the conflicting movements.
The bounds of the decision variables are specified in constraints (\ref{c:bound1})–(\ref{c:bound3}). The platoon size $L_\p$ is restricted to positive integers. To maintain safety, $\firsthw{n}$ and $\secondhw{n}$ must satisfy the minimum safety threshold $\tau_c$.

The following proposition provides theoretical justification for \emph{muting} low-demand movements and derives a lower bound on the cycle length:
\begin{proposition}\label{prop:multiple}
    Cycle length $C$ is a common multiple of the arrival headway for each unmuted movement, that is, $C=k_\p/q_\p$ for all $\p\in\mathbf{P}:1/q_\p\le\Cmin$, where $k_\p\in\mathbb{Z}^+$ is a positive integer.
\end{proposition}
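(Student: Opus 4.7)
\medskip
\noindent\textbf{Proof proposal.} The plan is to read the claim directly off the feasibility constraints of \textbf{M1}, using only (\ref{c:c-length-move}), (\ref{c:serv-rate}), and the integrality requirement in (\ref{c:bound3}). The core observation is that the service-rate balance already forces the product $q_\p C$ to be an integer for every unmuted movement, so no auxiliary construction is needed.

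First, I would fix an arbitrary unmuted movement $\p\in\mathbf{P}$, i.e.\ one satisfying $1/q_\p\le\Cmin$, so that constraint (\ref{c:serv-rate}) is active and yields
\begin{equation*}
    q_\p\,(r_\p+g_\p)=L_\p.
\end{equation*}
Next, I would substitute the cycle identity (\ref{c:c-length-move}), namely $r_\p+g_\p=C$, to obtain $q_\p C=L_\p$. Finally, since (\ref{c:bound3}) enforces $L_\p\in\mathbb{Z}^+$, I would simply set $k_\p:=L_\p$ and rearrange to get $C=k_\p/q_\p$ with $k_\p\in\mathbb{Z}^+$, proving the common-multiple property.

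The main (and really the only) subtle point to highlight is the role of the integrality of $L_\p$: it is precisely this discreteness that forces the continuous variable $C$ onto the rational grid $\{k/q_\p:k\in\mathbb{Z}^+\}$ and therefore, taken across all unmuted movements simultaneously, onto their common multiples. I would then briefly note the justification for muting: if $1/q_\p>\Cmin$, enforcing $q_\p C=L_\p\ge 1$ would drive $C\ge 1/q_\p>\Cmin$, inflating the cycle beyond the allowed threshold; replacing such a movement by the hypothetical single-vehicle release $L_\p=1$ removes it from the common-multiple constraint, which is exactly the modeling device justified by the proposition.
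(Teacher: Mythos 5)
Your proposal is correct and follows essentially the same route as the paper's proof: both combine constraints (\ref{c:c-length-move}) and (\ref{c:serv-rate}) to get $q_\p C = L_\p$ for unmuted movements, then invoke the integrality of $L_\p$ from (\ref{c:bound3}) to set $k_\p := L_\p$ and conclude $C = k_\p/q_\p$.
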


\begin{proof}
    By combining constraints (\ref{c:platoon-size}) and (\ref{c:bound3}), we obtain that $L_\p=\qmax g_\p\in\mathbb{Z}^+$ for all $\p\in\mathbf{P}:1/q_\p\le\Cmin$. Furthermore, applying constraints (\ref{c:c-length-move}) and (\ref{c:serv-rate}) yields $d_\p=q_\p C-L_\p=0$ for all $\p\in\mathbf{P}:1/q_\p\le\Cmin$. This implies $q_\p C\in\mathbb{Z}^+$, since $L_\p\in\mathbb{Z}^+$. Let $q_\p C = k_\p\in\mathbb{Z}^+$ and given that $q_\p>0$, we can conclude that
    \begin{equation}
        C=k_\p/q_\p, \quad \forall \p\in\mathbf{P}:1/q_\p\le\Cmin,
    \end{equation}
    where $k_\p\in\mathbb{Z}^+$. \qed
\end{proof}

On the one hand, the above proposition indicates that if a movement $\p$ with an extremely low flow rate $q_\p$ is not muted, the cycle length must satisfy $C\ge 1/q_\p$. This requirement can render model M1 infeasible when the minimum necessary cycle length exceeds the allowable maximum, i.e., $1/q_\p > \overline{C}$. On the other hand, the cycle length $C$ is also lower bounded by the least common multiple of $1/q_\p$ across all unmuted movements $\p\in\mathbf{P}$.

Nevertheless, despite muting all such movements, the following example demonstrates that model M1 can still become infeasible:
\begin{remark}
    Model \textnormal{M1} may become infeasible under over-saturated traffic conditions. To see this, consider a simplified scenario involving a single conflict point with two through movements: south-to-north ($\p_1$) and west-to-east ($\p_2$). Suppose both movements operate at the saturation flow rate, i.e., $q_{\p_1} = q_{\p_2} = \qmax$.
    For movement $\p_1$, constraints (\ref{c:c-length-move}), (\ref{c:serv-rate}), and (\ref{c:bound3}) collectively imply that $r_{\p_1} = 0$ and $C = g_{\p_1}$, indicating that $\p_1$ holds the right-of-way when repeating the cycle. Analogously, if these same constraints are satisfied for movement $\p_2$, it must also maintain uninterrupted right-of-way, with $r_{\p_2} = 0$ and $C = g_{\p_2}$.
    However, this leads to a contradiction at the conflict point: both movements simultaneously demand continuous access, which precludes the separation of feasible arrival and departure headways as mandated by constraint (\ref{c:c-length-conf}). This counterexample thus demonstrates the infeasibility of model \textnormal{M1} under such conditions.
\end{remark}

The infeasibility described above arises from the strict requirement $q_{\p}(r_{\p} + g_{\p}) - L_\p = 0$, which enforces complete queue discharge under oversaturated traffic conditions. To restore feasibility in such cases, model M1 is reformulated into the following relaxed version by dropping constraints (\ref{c:serv-rate}):
\begin{subequations}\label{m:min_cycle}
{
\begin{align}
    [\textbf{M2}]~ &\min~ f_2= (1-\lambda) C + \lambda \sum_{\p\in\mathbf{P}} - L_\p \label{obj:f2} \\
    \text{s.t.}
    &\quad (\ref{c:c-length-conf})-(\ref{c:platoon-size}),~(\ref{c:platoon-time})-(\ref{c:bound3})
\end{align}
}
\end{subequations}

In contrast, the objective function (\ref{obj:f2}) prioritizes maximizing the total throughput under over-saturated conditions.
\begin{proposition}\label{prop:M2-feasible}
    Model \textnormal{M2} always admits a feasible solution if $\overline{C}\ge \max\{2\tau_c + 2l/v_f,~\max_{\p\in\mathbf{P}} 1/q_\p\}$ and $\Cmin\le \max\{2\tau_c + 2l/v_f,~\max_{\p\in\mathbf{P}} 1/q_\p\}-1/\qmax$.
\end{proposition}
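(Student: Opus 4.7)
The plan is to exhibit an explicit feasible solution of M2 at the smallest admissible cycle length, using unit platoons on every movement, and then verify each constraint. Set the cycle length $C = C^{\star} := \max\{2\tau_c + 2l/v_f,~\max_{\p\in\mathbf{P}} 1/q_\p\}$, which lies in $[0,\overline{C}]$ by the hypothesis $\overline{C}\ge C^{\star}$. For every movement $\p$ take $L_\p = 1$, which by (\ref{c:platoon-time}) yields $\platime{\p} = l/v_f$. Assign $g_\p = 1/\qmax$ and $r_\p = C - 1/\qmax$; since $\tau_c > \tau_f$ implies $2\tau_c + 2l/v_f > \tau_f + l/v_f = 1/\qmax$, we obtain $r_\p \ge 0$. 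Then (\ref{c:c-length-move}) holds by construction and (\ref{c:platoon-size}) holds in both branches: muted movements directly satisfy $L_\p=1$, while unmuted ones satisfy $L_\p = \qmax g_\p = 1$. The assumption $\Cmin \le C^{\star} - 1/\qmax$ enters here to keep the muting classification consistent with the construction, since it guarantees $C^{\star} \ge \Cmin + 1/\qmax$ and hence $r_\p\ge 0$ even for unmuted movements whose arrival headway may be as small as $\Cmin$.

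I would next verify the per-conflict-point constraints. At each conflict point $n$ with $\mathbf{P}_n=\{\p_1,\p_2\}$, set $\firsthw{n} = \secondhw{n} = (C - 2l/v_f)/2$, which satisfies (\ref{c:c-length-conf}) and meets the lower bound $\tau_c$ in (\ref{c:bound2}) because $C \ge 2\tau_c + 2l/v_f$. The remaining bounds in (\ref{c:bound1})--(\ref{c:bound3}) are immediate from the construction.

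The final and hardest step is to choose offsets $\toff{\p}$ and sequencing binaries $z_{\p_1,\p_2,n}$ so that (\ref{c:arrive-time}) and (\ref{c:headway}) reproduce the target headways fixed above. Viewed modulo $C$, each conflict point imposes a constraint on the offset difference of its two movements restricting it to an arc of length $C - 2\tau_c - 2l/v_f \ge 0$ on the cyclic timeline, with $z$ selecting which platoon occupies the first slot. I would establish feasibility by propagating offsets along a spanning tree of the movement--conflict graph: starting from any root movement with $\toff{\p}=0$, each child's offset is fixed from its parent's together with an admissible $\firsthw{n}$, and any remaining off-tree constraint is handled by using the slack $C - 2\tau_c - 2l/v_f$ combined with the binary sequencing freedom. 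The main obstacle is precisely this coupled scheduling step: each movement contributes only a single variable $\toff{\p}$ yet participates in potentially many conflict points, so the argument must exhibit a joint assignment that respects every pair of conflicting movements simultaneously.
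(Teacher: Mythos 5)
Your construction coincides with the paper's proof up to the decisive step: both take $L_\p=1$, $g_\p=1/\qmax$, hence $\platime{\p}=l/v_f$, set $C=\max\{2\tau_c+2l/v_f,\ \max_{\p\in\mathbf{P}}1/q_\p\}$, and reserve headways of at least $\tau_c$ at every conflict point. The genuine gap is that you never establish the existence of offsets $\toff{\p}$ and sequencing variables $z_{\p_1,\p_2,n}$ that actually realize those headways through (\ref{c:arrive-time}) and (\ref{c:headway}); you name this coupled scheduling step as ``the main obstacle'' and sketch a spanning-tree propagation, but the off-tree edges are exactly where the sketch fails. Each movement carries a single offset, so when the movement--conflict graph contains cycles the conflict-point constraints around a cycle must be mutually consistent, and your proposed remedy of exploiting ``the slack $C-2\tau_c-2l/v_f$ plus the binary freedom'' has no force in the case $\max_{\p\in\mathbf{P}}1/q_\p\le 2\tau_c+2l/v_f$: there $C=2\tau_c+2l/v_f$, the slack is identically zero, and every off-tree conflict point imposes an exact equation on offsets already pinned down by the tree. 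A further wrinkle is that constraints (\ref{c:headway}) are written on first-cycle arrival times, not modulo $C$, so the ``arc on the cyclic timeline'' picture you invoke is itself something that would have to be justified before it can be used.

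For comparison, the paper closes this step by fixing all $z_{\p_1,\p_2,n}$ from one global priority order of the movements, rewriting (\ref{c:headway}) as the exact linear system (\ref{feas-proof-subsys}) in offset differences that forces $\firsthw{n}=\tau_c$, and arguing solvability from the facts that any two movements share at most one conflict point and that the offsets are free, unbounded continuous variables. Whatever one thinks of the brevity of that argument, a complete proof must supply something at this point---for instance an explicit serial schedule in which the single released platoons of distinct movements occupy pairwise disjoint time windows within the (sufficiently long) first-cycle horizon, or a consistency argument for the resulting system of difference constraints---rather than deferring it as you do. A minor secondary point: your reading of the hypothesis on $\Cmin$ is off, since $r_\p=C-1/\qmax\ge 0$ already follows from $C\ge 2\tau_c+2l/v_f>\tau_f+l/v_f=1/\qmax$; the $\Cmin$ condition plays no role in guaranteeing $r_\p\ge 0$.
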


\begin{proof}
    We prove the result by constructing a feasible solution to constraints (\ref{c:c-length-conf})-(\ref{c:platoon-size}) and  (\ref{c:platoon-time})-(\ref{c:bound3}). For every movement $\p\in\mathbf{P}$, set:
    \begin{equation}
        g_\p := \frac{1}{q_{\max}}, \qquad L_\p := 1.
    \end{equation}

    Constraints \eqref{c:platoon-size} hold automatically: for muted movements, $L_\p = 1$ by definition; and for unmuted movements, $L_\p = q_{\max} g_\p = 1$. Then, using \eqref{c:platoon-time}, we have:
    \begin{equation}
        \platime{\p} = (L_\p - 1)\tau_f + L_\p\frac{l}{v_f} = \frac{l}{v_f}.
    \end{equation}

    Let $r_\p := C - g_\p$ for each $\p\in\mathbf{P}$. Constraint \eqref{c:c-length-move} is satisfied once $C$ is chosen. Without loss of generality, let $\mathbf{P}^*$ be an ordered sequence of $\mathbf{P}$, arranged alphabetically. For each conflict point $n\in\mathbf{N}$ with $\{\p_1,\p_2\} = \mathbf{P}_n$, set $z_{\p_1,\p_2,n} = 1$ if $\p_1$ precedes $\p_2$ in $\mathbf{P}^*$.

    Using \eqref{c:arrive-time}, we can now express \eqref{c:headway} as:
    \begin{equation}
        \firsthw{n} =
        \begin{cases}
            \toff{\p_2} - \toff{\p_1} + \ttravel{\p_2}{n} - \ttravel{\p_1}{n} - \dfrac{l}{v_f}, & \text{if } z_{\p_1,\p_2,n} = 1,\\[0.5em]
            \toff{\p_1} - \toff{\p_2} + \ttravel{\p_1}{n} - \ttravel{\p_2}{n} - \dfrac{l}{v_f}, & \text{otherwise},
        \end{cases}
        \quad \forall n\in\mathbf{N}.
        \label{c:headway-feas-proof}
    \end{equation}

    We choose $\toff{\p}$ to satisfy:
    \begin{equation}\label{feas-proof-subsys}
        \begin{aligned}
            \begin{cases}
                \toff{\p_2} - \toff{\p_1} = \dfrac{l}{v_f} + \tau_c + \ttravel{\p_1}{n} - \ttravel{\p_2}{n}, & \text{if } z_{\p_1,\p_2,n} = 1,\\[0.5em]
                \toff{\p_1} - \toff{\p_2} = \dfrac{l}{v_f} + \tau_c + \ttravel{\p_2}{n} - \ttravel{\p_1}{n}, & \text{otherwise},
            \end{cases}
            \quad \forall n\in\mathbf{N}.
        \end{aligned}
    \end{equation}

    To demonstrate the existence of solutions $\toff{\p}$ that satisfy (\ref{feas-proof-subsys}), we begin by noting that, in a general intersection, any two movements share at most one conflict point. In other words, for any pair $\p_1, \p_2 \in \mathbf{P}$, there exists a unique $n \in \mathbf{N}$ such that $\{\p_1, \p_2\} = \mathbf{P}_n$.
    Moreover, since the value of $z_{\p_1,\p_2,n}$ is determined solely by the order of $\p_1$ and $\p_2$ in the sequence $\mathbf{P}^*$, independent of $n$, and given that $\toff{\p}$ is a free continuous variable without an upper bound, it follows that suitable values of $\toff{\p} \ge 0$ can always be selected to satisfy subsystem (\ref{feas-proof-subsys}).

    Substituting \eqref{feas-proof-subsys} into \eqref{c:headway-feas-proof} yields $\firsthw{n} = \tau_c$. Constraints \eqref{c:bound1} are satisfied. Next, let:
    \begin{equation}
        C := \max\left\{2\tau_c + 2l/v_f,\ \max_{\p\in\mathbf{P}} \frac{1}{q_\p}\right\}.
    \end{equation}

    From \eqref{c:c-length-conf}, the second departure-arrival headway in a cycle is:
    \begin{equation}
        \secondhw{n} = C - (\platime{\p_1} + \platime{\p_2} + \firsthw{n})
            = C - \left(2l/v_f + \tau_c\right) \ge \tau_c,
    \end{equation}
    so that \eqref{c:bound2} is satisfied.

    For each $\p$ with $1/q_\p \le \Cmin$, define $d_\p := q_\p(r_\p + g_\p) - L_\p = q_\p C - 1 \ge 0$, since $C \ge \max_{\p\in\mathbf{P}} 1/q_\p$. Therefore, constraints \eqref{c:serv-rate} and \eqref{c:bound3} hold.
    Finally, since $C = \max\{2\tau_c + 2l/v_f,~\max_{\p\in\mathbf{P}} 1/q_\p\} \le \overline{C}$, constraints \eqref{c:bound2} also hold. \qed
\end{proof}

It is worth noting that constraints (\ref{c:headway}) are inherently non-linear due to the presence of logical conditions. These ``if–else" expressions can be systematically reformulated and subsequently linearized using the Reformulation–Linearization Technique (RLT) \citep{sherali2013reformulation, philip2024mixed}. RLT operates in two primary phases: reformulation and linearization. In the reformulation phase, redundant nonconvex constraints are introduced through the pairwise multiplication of existing linear or quadratic inequalities. During the linearization phase, each distinct product of variables is substituted with a newly defined continuous variable. As a result, constraints (\ref{c:headway}) can be equivalently represented in a linear form as follows:
\begin{remark}
    The constraints in (\ref{c:headway}) can be rearranged into the following equivalent non-linear form:
    \begin{equation}\label{c:headway_ref1}
        \firsthw{n} = z_{\p_1,\p_2,n} (\tarr{\p_2}{n} - \tarr{\p_1}{n} - \platime{\p_1}) + (1-z_{\p_1,\p_2,n})(\tarr{\p_1}{n} - \tarr{\p_2}{n} - \platime{\p_2}), \quad \forall n\in\mathbf{N},~\{\p_1,\p_2\}=\mathbf{P}_n
    \end{equation}

    To linearize constraints (\ref{c:headway_ref1}), we introduce the following continuous variables:
    \begin{equation}\label{RLT:vardefine}
        \begin{aligned}
            \linearvarXunder{\p_1}{\p_2}{n} := z_{\p_1,\p_2,n}\tarr{\p_1}{n}, &\quad \linearvarXover{\p_1}{\p_2}{n} := z_{\p_1,\p_2,n}\tarr{\p_2}{n},\\
            \linearvarYunder{\p_1}{\p_2}{n} := z_{\p_1,\p_2,n}\platime{\p_1}, &\quad \linearvarYover{\p_1}{\p_2}{n} := z_{\p_1,\p_2,n}\platime{\p_2}
        \end{aligned}
    \end{equation}

    Substituting these definitions, constraints (\ref{c:headway_ref1}) can be rewritten as:
    \begin{equation}\label{c:headway_ref2}
        \firsthw{n} = \tarr{\p_1}{n}-\tarr{\p_2}{n}-\platime{\p_2} + 2\overline{x}_{\p_1,\p_2,n} - 2\underline{x}_{\p_1,\p_2,n} - \underline{y}_{\p_1,\p_2,n} + \overline{y}_{\p_1,\p_2,n}, \quad \forall n\in\mathbf{N},~\{\p_1,\p_2\}=\mathbf{P}_n
    \end{equation}

    We then define valid lower and upper bounds for each continuous variable:
    \begin{equation}\label{RLT:bounds}
        \tarrLB{\p}{n}\le\tarr{\p}{n}\le\tarrUB{\p}{n}, \quad \platimeLB{\p}\le\platime{\p}\le\platimeUB{\p}
    \end{equation}
    
    Applying standard RLT principles, the non-linear terms in (\ref{RLT:vardefine}) can be linearized using the following constraints:
    \begin{equation}\label{RLT:1}
        \begin{alignedat}{2}
            \linearvarXunder{\p_1}{\p_2}{n} &\ge \tarrLB{\p_1}{n} z_{\p_1,\p_2,n}, \qquad
            &\linearvarXunder{\p_1}{\p_2}{n} &\le \tarrUB{\p_1}{n} z_{\p_1,\p_2,n}\\
            \linearvarXunder{\p_1}{\p_2}{n} &\ge \tarr{\p_1}{n} - \tarrUB{\p_1}{n} (1-z_{\p_1,\p_2,n}), \qquad
            &\linearvarXunder{\p_1}{\p_2}{n} &\le \tarr{\p_1}{n} - \tarrLB{\p_1}{n} (1-z_{\p_1,\p_2,n})
        \end{alignedat}
    \end{equation}
    \begin{equation}\label{RLT:2}
        \begin{alignedat}{2}
            \linearvarXover{\p_1}{\p_2}{n} &\ge \tarrLB{\p_2}{n} z_{\p_1,\p_2,n}, \qquad
            &\linearvarXover{\p_1}{\p_2}{n} &\le \tarrUB{\p_2}{n} z_{\p_1,\p_2,n}\\
            \linearvarXover{\p_1}{\p_2}{n} &\ge \tarr{\p_2}{n} - \tarrUB{\p_2}{n} (1-z_{\p_1,\p_2,n}), \qquad
            &\linearvarXover{\p_1}{\p_2}{n} &\le \tarr{\p_2}{n} - \tarrLB{\p_2}{n} (1-z_{\p_1,\p_2,n})
        \end{alignedat}
    \end{equation}
    \begin{equation}\label{RLT:3}
        \begin{alignedat}{2}
            \linearvarYunder{\p_1}{\p_2}{n} &\ge \platimeLB{\p_1} z_{\p_1,\p_2,n}, \qquad
            &\linearvarYunder{\p_1}{\p_2}{n} &\le \platimeUB{\p_1} z_{\p_1,\p_2,n}\\
            \linearvarYunder{\p_1}{\p_2}{n} &\ge \platime{\p_1} - \platimeUB{\p_1} (1-z_{\p_1,\p_2,n}), \qquad
            &\linearvarYunder{\p_1}{\p_2}{n} &\le \platime{\p_1} - \platimeLB{\p_1} (1-z_{\p_1,\p_2,n})
        \end{alignedat}
    \end{equation}
    \begin{equation}\label{RLT:4}
        \begin{alignedat}{2}
            \linearvarYover{\p_1}{\p_2}{n} &\ge \platimeLB{\p_2} z_{\p_1,\p_2,n}, \qquad
            &\linearvarYover{\p_1}{\p_2}{n} &\le \platimeUB{\p_2} z_{\p_1,\p_2,n}\\
            \linearvarYover{\p_1}{\p_2}{n} &\ge \platime{\p_2} - \platimeUB{\p_2} (1-z_{\p_1,\p_2,n}), \qquad
            &\linearvarYover{\p_1}{\p_2}{n} &\le \platime{\p_2} - \platimeLB{\p_2} (1-z_{\p_1,\p_2,n})
        \end{alignedat}
    \end{equation}

    Collectively, equations (\ref{c:headway_ref2})–(\ref{RLT:4}) provide a linear formulation that is equivalent to the original non-linear constraints (\ref{c:headway}).

    Next, we derive the bounds for $\tarr{\p}{n}$ and $\platime{\p}$. Since $L_\p \ge 1$, and by combining constraints (\ref{c:platoon-time}), the lower bound of $\platime{\p}$ can be set as $\platimeLB{\p} = l / v_f$. To establish its upper bound, we first note that the maximum platoon size occurs when 
    \begin{equation}
        L_\p = \left\lfloor \frac{C}{1/\qmax} \right\rfloor,
    \end{equation}
    and given that $C$ is upper bounded by $\overline{C}$, we obtain
    \begin{equation}
        \platimeUB{\p} = \left( \left\lfloor \frac{\overline{C}}{1/\qmax} \right\rfloor - 1 \right) \tau_f + \left\lfloor \frac{\overline{C}}{1/\qmax} \right\rfloor \frac{l}{v_f}.
    \end{equation}
    
    We impose a trivial lower bound on the arrival time as $\tarrLB{\p}{n} = 0$. To determine its upper bound, we first observe that in the optimal solution, $\toff{\p}$ satisfies
    \begin{equation}
        \toff{\p} \le (|\mathbf{P}| - 1) C \le (|\mathbf{P}| - 1) \overline{C},
    \end{equation}
    which implies that each movement cycle begins only after the first cycles of all other movements have concluded, and that cycles corresponding to different movements do not temporally overlap. 
    
    Moreover, since $r_\p = C - g_\p \le \overline{C} - g_\p$ and $g_\p$ is bounded below by $1/\qmax$ (because $L_\p = \qmax g_\p \ge 1$), it follows that
    \begin{equation}
        r_\p \le \overline{C} - \frac{1}{\qmax}.
    \end{equation}
    
    Combining the above with constraint (\ref{c:arrive-time}), the upper bound of $\tarr{\p}{n}$ can be expressed as
    \begin{equation}
        \tarrUB{\p}{n} = |\mathbf{P}| \, \overline{C} - \frac{1}{\qmax} + \ttravel{\p}{n}.
    \end{equation}
\end{remark}

\section{Numerical experiments}\label{sec:exps}
In this section, we evaluate the proposed CMAT framework against two representative benchmark methods under fully CAV traffic conditions. The first benchmark is the Rhythmic Control (RC) scheme \citep{chen2021rhythmic}, which enables vehicles to traverse intersections without stopping or conflicts by following a predefined rhythmic pattern. The second benchmark is Traditional Signal Control (TSC), which reflects current standard practice in traffic operations. For TSC, we apply the minimum cycle length formula \citep{webster1966traffic, urbanik2015signal} to compute signal timings aimed at minimizing the cycle length.
\begin{figure}[htbp]
    \centering
    \includegraphics[width=0.8\linewidth]{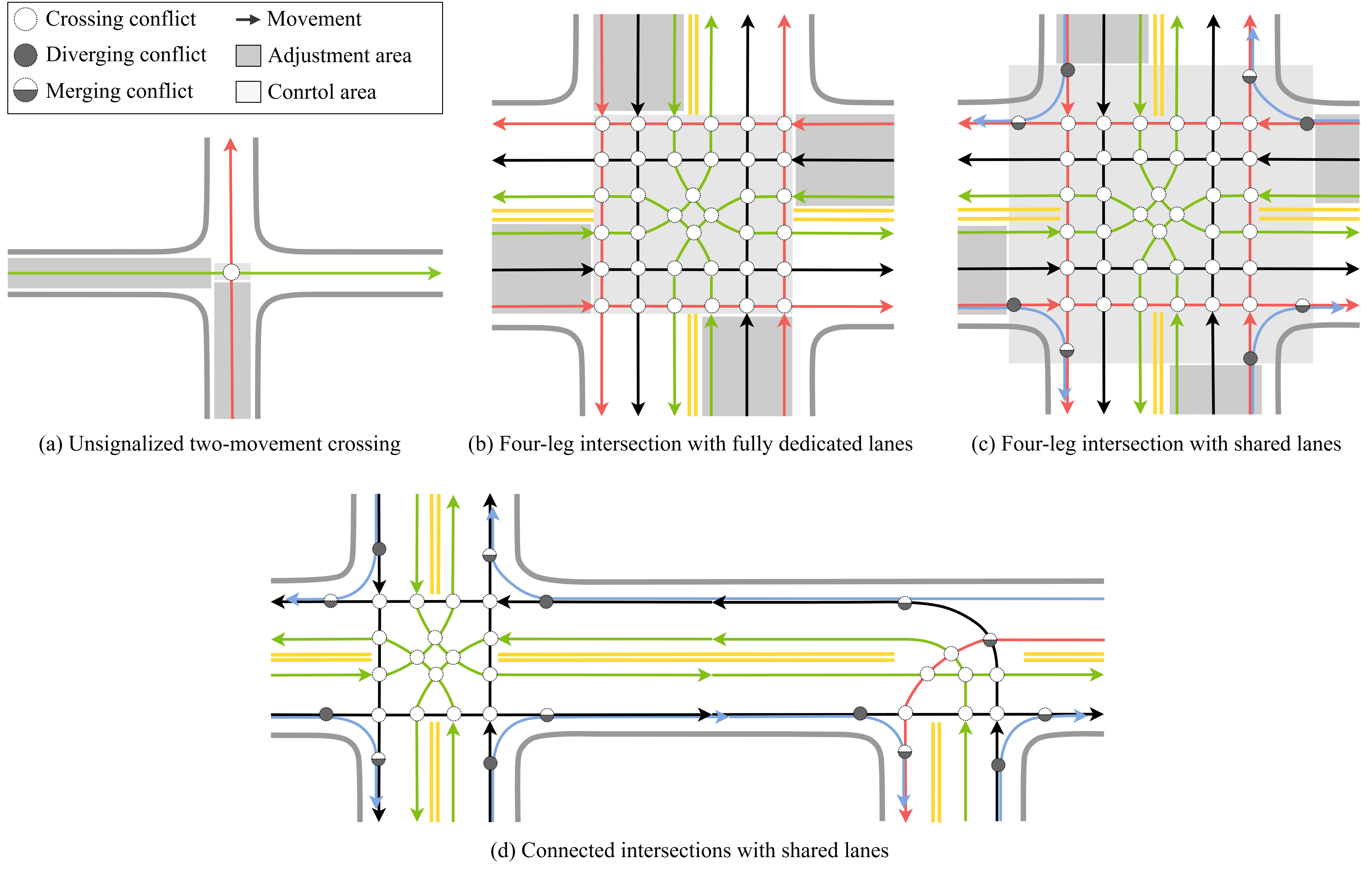}
    \caption{Layouts of intersections for test scenarios}
    \label{fig:test_scenarios}
    \captionsetup{justification=centering}
\end{figure}

To ensure a comprehensive assessment, four distinct traffic scenarios are considered, as illustrated in \autoref{fig:test_scenarios}. The first scenario involves an unsignalized two-movement crossing with a single conflict point. The second and third scenarios extend to four-leg intersections with multiple conflict points: one featuring fully dedicated lanes, and the other incorporating shared through/right-turn lanes.
The final scenario considers a connected coordination setting in which a four-leg intersection is linked to an asymmetric intersection, where the spacing between the two intersections is set to $L = 500$ m. When applying TSC to the connected intersections, signal timings at each intersection are optimized independently using the minimum cycle-length formula. A timing offset of $L / v_f$ seconds is then applied at the minor intersection to promote coordinated arrivals from the major intersection.

The key parameters are set as $v_f = 18$ m/s, $l = 4.5$ m, $\tau_f = 1$ s, $\tau_c = 2$ s, $\Cmin = 10$ s, $\lambda = 0.9$, $\ObjSwitchCoef = 0.05$, and $\overline{C} = 120$ s across all scenarios.

\subsection{Unsignalized single-conflict case}
In this unsignalized traffic scenario, we compare the performance of CMAT with that of RC. Two traffic demand patterns are examined. The first reflects a balanced demand condition, in which the movements exhibit similar traffic intensities. The second represents an imbalanced condition, where some movements carry much higher traffic volumes than others. To assess performance across a wide spectrum of traffic levels, we apply a scaling factor $\beta$, varied from $\underline{\beta}$ to $\overline{\beta}$ in increments of 0.1, to adjust the baseline demand vector proportionally.

In the balanced case, the demand vector is $\beta[1000, 1000]$ (veh/h/movement), where the entries correspond to the eastbound and northbound movements, respectively. In the imbalanced case, the demand vector is $\beta[1800, 100]$ (veh/h/movement), which imposes a markedly higher demand on the eastbound movement.
To examine how traffic intensity influences system performance, four measures are evaluated: (i) total throughput of the intersection, defined as the effective discharge rate (veh/h); (ii) average per-vehicle delay; (iii) the cycle length of CMAT; and (iv) the platoon sizes of CMAT. The results are shown in \autoref{fig:case1}.

\begin{figure}[htbp]
    \centering
    \includegraphics[width=1\linewidth]{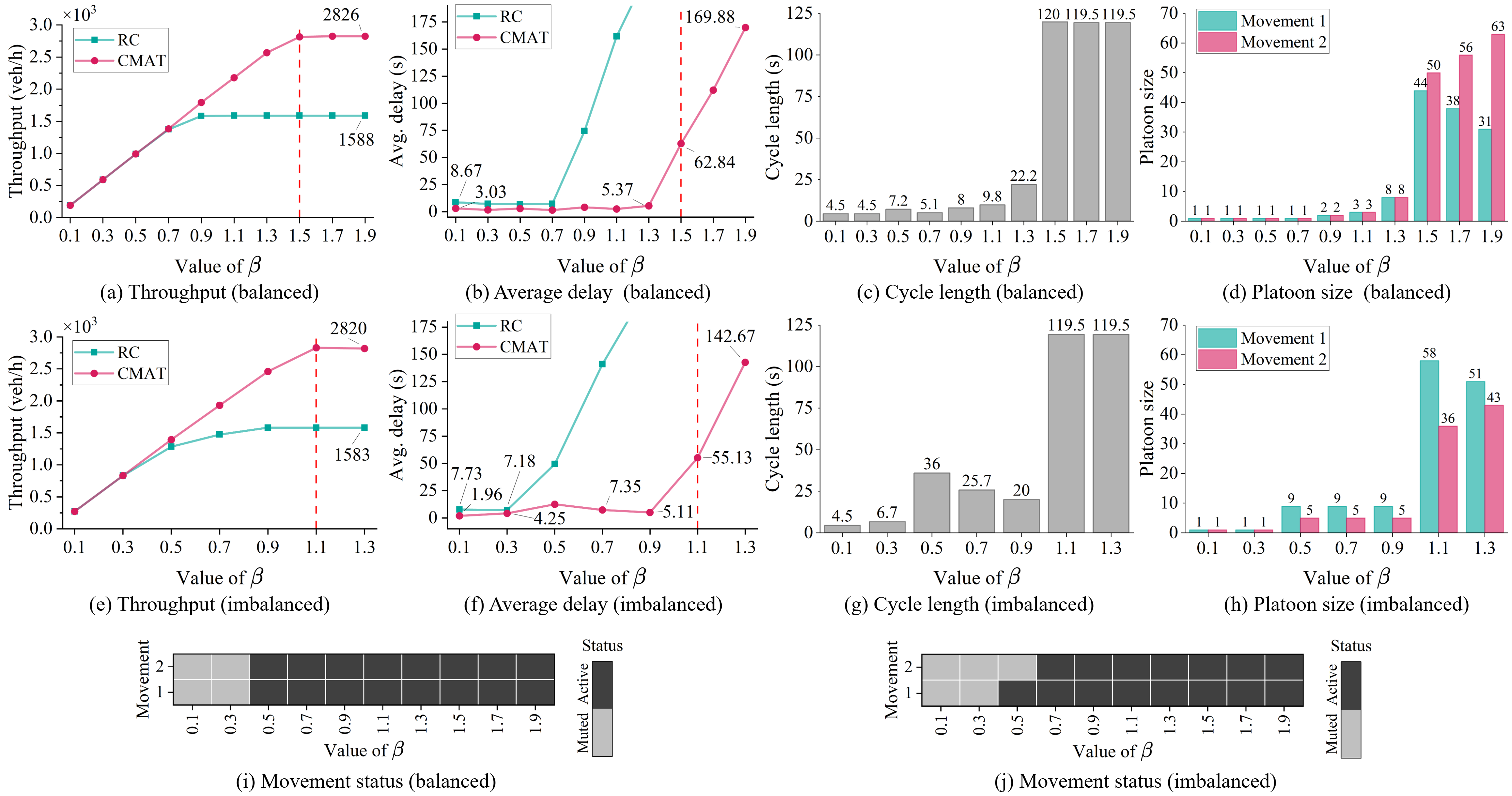}
    \caption{Results comparison in the unsignalized two-movement crossing (the red dashed line marks the point CMAT switches from model M1 to M2).}
    \label{fig:case1}
    \captionsetup{justification=centering}
\end{figure}

We observe that both throughput and delay exhibit a non-decreasing trend as $\beta$ increases under both demand patterns. In the low-flow setting, both methods are able to accommodate all arriving vehicles, while CMAT consistently yields lower delays than RC. As demand grows and the intersection becomes oversaturated, RC reaches its capacity $\frac{1}{\tau_c + l/v_f}$ earlier than CMAT in both scenarios. The improvement offered by CMAT is substantial: its capacity increases by approximately 80\%, from about \num{1500} veh/h to roughly \num{2800} veh/h. Meanwhile, CMAT also maintains significantly lower delays under oversaturation, as the limited capacity of RC leads to pronounced queue accumulation and consequently higher average delays.

We further note that the different demand patterns do not materially influence the relative capacity gains: the capacities of both methods remain essentially identical under balanced and imbalanced conditions. As discussed in \autoref{sec:exps}, throughput is governed by the average departure–arrival headway at the conflict point when the average speed is fixed. Under oversaturated conditions, regardless of the distribution of demand, RC alternates right-of-way between movements, causing its headway to converge to $\tau_c$. CMAT, in contrast, forms larger platoons on one movement as needed, driving its effective headway toward $\tau_f$. Therefore, the capacities of RC and CMAT follow $\frac{1}{\tau_c + l/v_f}$ and $\frac{1}{\tau_f + l/v_f}$, respectively.

An additional observation is that CMAT’s cycle length reaches its upper bound $\overline{C}$ once the capacity threshold is approached. This behavior arises from CMAT’s transition from model M1 to model M2. When arriving demand is too high for M1 to fully dissipate queues within a cycle, M1 becomes infeasible. CMAT then adopts M2, a relaxed formulation that removes the complete queue-clearance requirement and places greater emphasis on maximizing throughput. Thus, CMAT allocates larger platoon sizes and longer cycle lengths to boost overall throughput.

Overall, the marked increase in throughput (over $75\%$ improvement) and the substantial reduction in average delay over $50\%$ under non-saturated conditions demonstrate that CMAT can effectively enhance operational performance for a single conflict point setting.

\subsection{Four-leg intersection with fully dedicated lanes}
We extend the analysis to a more complex signalized multi-conflict scenario as shown in \autoref{fig:test_scenarios}(b), where each leg comprises two through lanes and one dedicated left-turn lane. The balanced demand pattern is specified as $\beta[1000, 1000, 1000]$ (veh/h/movement), and the imbalanced flow pattern is given by $\beta[1100, 1100, 500]$ (veh/h/movement). This demand vector is applied identically to each leg, with the first two entries corresponding to the through lanes and the third to the left-turn lane.
Since RC requires strict geometric alignment or redesign of the intersection layout, it is not included in this analysis. The comparison is therefore carried out between TSC and CMAT. The results are shown in \autoref{fig:case2}.

CMAT demonstrates strong potential in general multi-conflict settings, achieving more than a 50\% increase in capacity over TSC and exhibiting lower average delays. This improvement is partly due to CMAT’s ability to operate with shorter cycle lengths than TSC when demand is relatively low (prior to TSC reaching oversaturation). The reduction in cycle length arises from the greater flexibility of the ``phantom' micro-signal, which leverages CAV connectivity to eliminate inefficiencies inherent in TSC, such as all-red intervals, start-up lost time, and unnecessary deceleration.

Under both demand patterns, we also observe that platoon sizes tend to cluster near one when actual demand on certain movements is extremely low. This occurs because such movements become muted in the model, causing CMAT to assume the arrival of a single hypothetical vehicle per cycle, which slightly overestimates demand but is reasonable since platoon formation offers no advantage at low flow rates. Fixing platoon sizes to one under these conditions helps shorten the cycle and reduces delay. As $\beta$ increases and all movements become active, CMAT’s demand-responsive structure emerges clearly: platoon sizes expand and display greater variability across movements. The results further show that the imbalanced-demand case produces a wider dispersion of platoon sizes than the balanced case, with higher-volume movements receiving proportionally larger platoons.

An interesting phenomenon appears at $\beta=0.7$ under the imbalanced pattern, where TSC achieves slightly higher throughput and lower delay than CMAT. This outcome stems from CMAT’s underestimation of arrival demand, as some movements are still being \emph{muted}. In such cases, CMAT selects a substantially shorter cycle length than TSC. While a shorter cycle can reduce delay in principle, insufficient cycle length leads to the formation of queues, which in turn reduces total throughput. The waiting time of vehicles in these queues then further increases the overall average delay. Nevertheless, this phenomenon occurs only rarely when the arrival demand approaches TSC’s capacity, and it can be mitigated by slightly decreasing the threshold $\Cmin$ to re-activate movements that would otherwise remain \emph{muted}.

\begin{figure}[htbp]
    \centering
    \includegraphics[width=\linewidth]{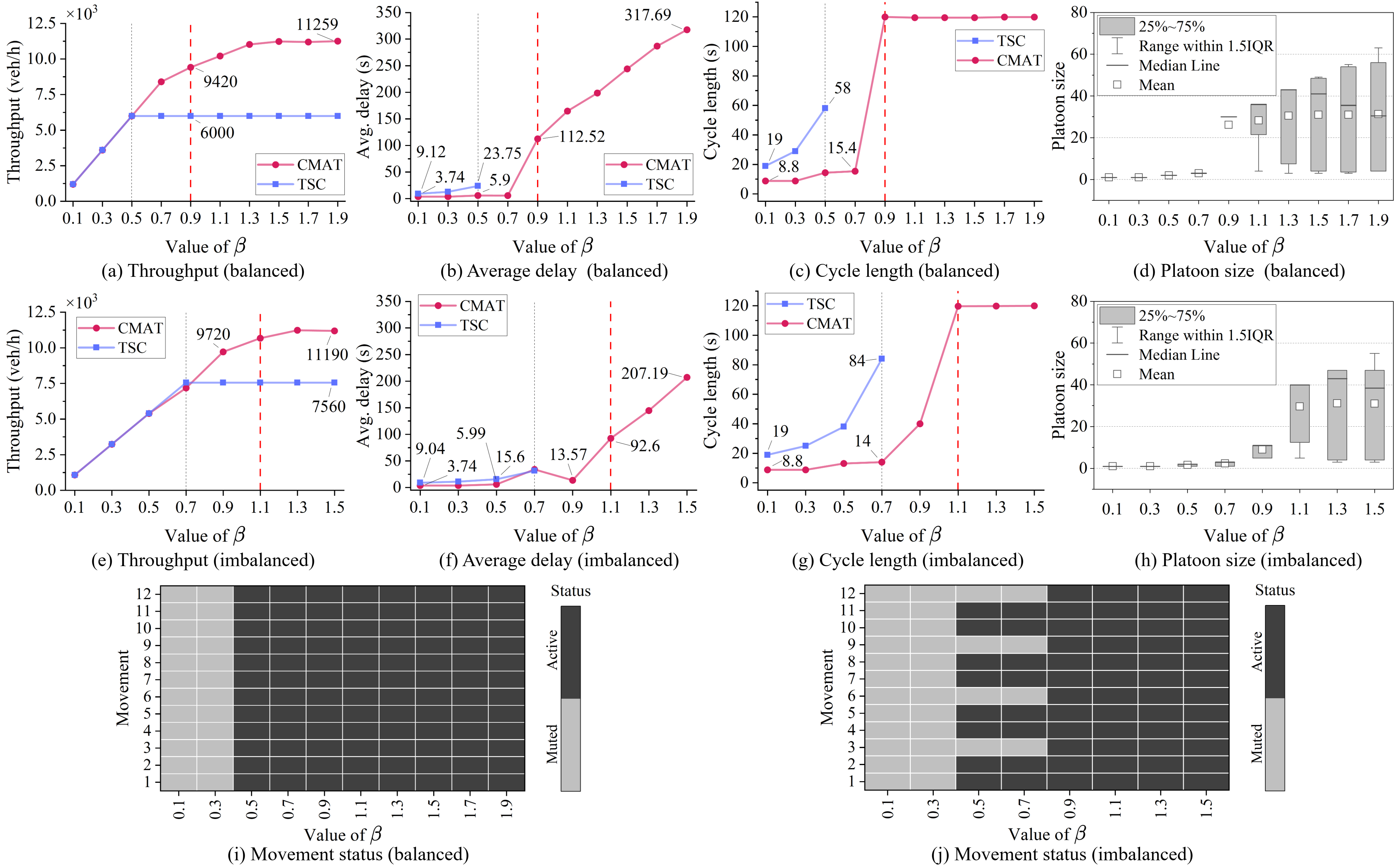}
    \caption{Results comparison in the four-leg intersection with fully dedicated lanes (the gray dashed line marks the point at which TSC is infeasible due to the oversaturation, while the red dashed line marks the point CMAT switches from model M1 to M2).}
    \label{fig:case2}
    \captionsetup{justification=centering}
\end{figure}

\subsection{Four-leg intersection with shared lanes}
We further extend the experiment to a more general four-leg intersection incorporating shared through/right lanes, as shown in \autoref{fig:test_scenarios}(c).
Following the previous setup, the arrival demand vectors for each leg are $\beta[500, 1000, 1000, 500]$ (veh/h/movement) and $\beta[450, 1800, 1800, 900]$ (veh/h/movement) for the balanced and imbalanced cases, respectively. The first entry denotes the right-turn movement, followed by the two through movements and the left-turn movement. The simulation results are provided in \autoref{fig:case3}.

\begin{figure}[htbp]
    \centering
    \includegraphics[width=1\linewidth]{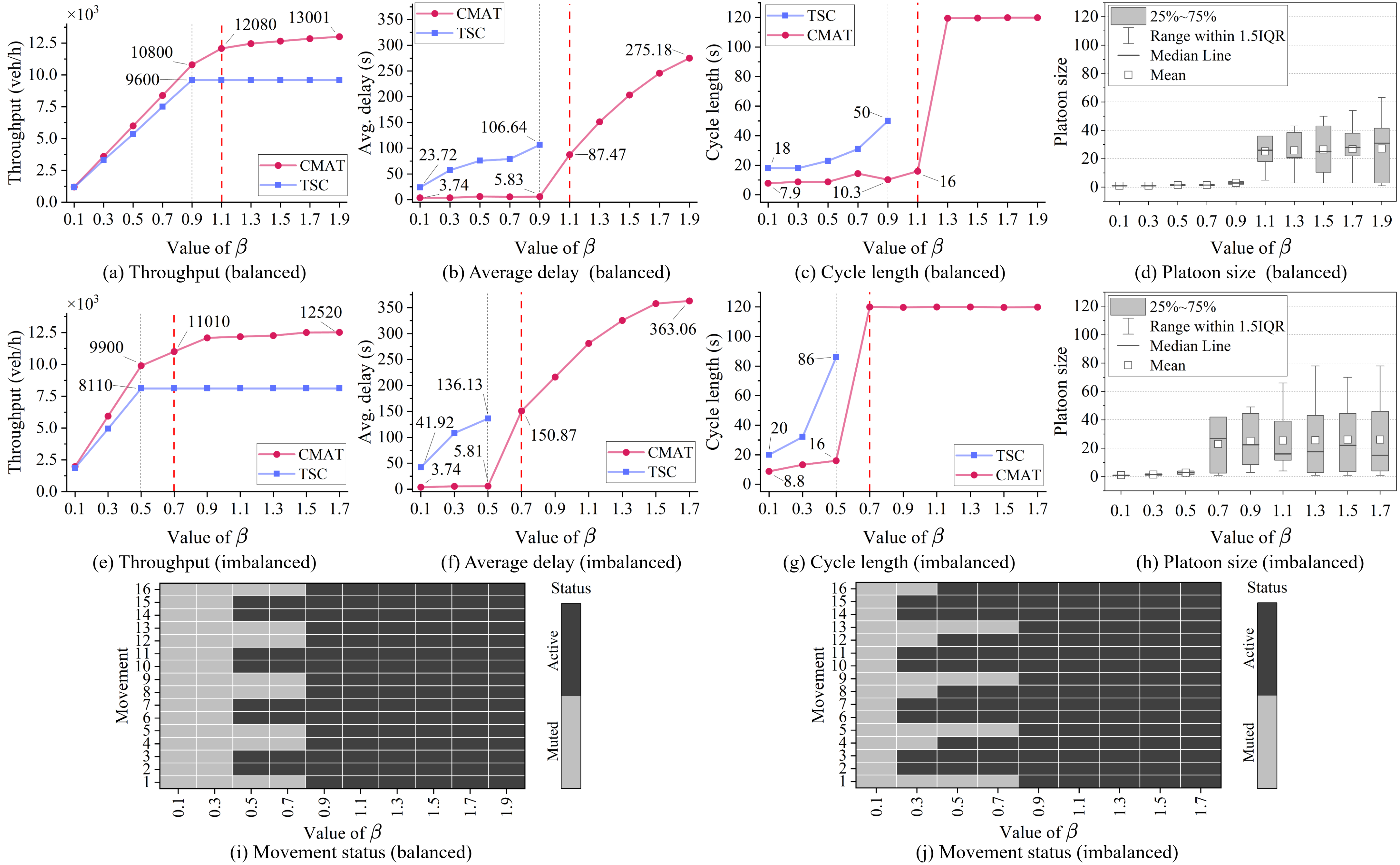}
    \caption{Results comparison in the four-leg intersection with shared lanes (the gray dashed line marks the point at which TSC is infeasible due to the oversaturation, while the red dashed line marks the point CMAT switches from model M1 to M2).}
    \label{fig:case3}
    \captionsetup{justification=centering}
\end{figure}

Consistent with the previous observations, CMAT continues to show the potential to increase intersection capacity, lower average delay, and operate with a shorter cycle than TSC. However, the capacity gain, which ranges from $35\%$ to $55\%$, is smaller than in the previous fully dedicated lane setting. This is mainly due to the increased capacity of TSC. The reason follows from the fundamental control structure: CMAT assigns right-of-way through a movement-based framework, in which the through and right-turn movements compete for use of the shared through/right-turn lane. These two movements are interdependent and maintain a diverging conflict relationship, alternatively owning the right-of-way. Therefore, adding the right-turn movement to the directed graph only slightly changes the overall conflict layout, which does not significantly influence the capacity improvement of CMAT. In contrast, TSC uses a phase-split timing structure, so the right-turn movement can share right-of-way with the through movement in the same phase. This increases the capacity of TSC relative to the fully dedicated lane case.

\subsection{Connected intersections with shared lanes}
To evaluate CMAT’s capability in managing multiple connected intersections at a network level, we extend the analysis to a scenario where a four-leg (major) intersection is connected to an asymmetric (minor) intersection, as illustrated in \autoref{fig:test_scenarios}(d). For consistency across simulations, we set the threshold cycle length to $\Cmin = 30$ seconds.

The performance of CMAT is compared against that of TSC under both balanced and imbalanced demand conditions, represented by the demand vectors $\beta[500, 1000, 500]$ (veh/h/movement) and $\beta[450, 1800, 900]$ (veh/h/movement), respectively. Each vector entry corresponds to the right-turn, through, and left-turn movements on each leg. For movements traversing both intersections, the flow rate is determined based on its classification at the major intersection. For instance, in the balanced scenario, the eastbound through movement at the minor intersection corresponds to a right-turn movement at the major intersection, thus receiving a flow rate of $500\beta$ veh/h. The simulation results are presented in \autoref{fig:case4}.

\begin{figure}[htbp]
    \centering
    \includegraphics[width=1\linewidth]{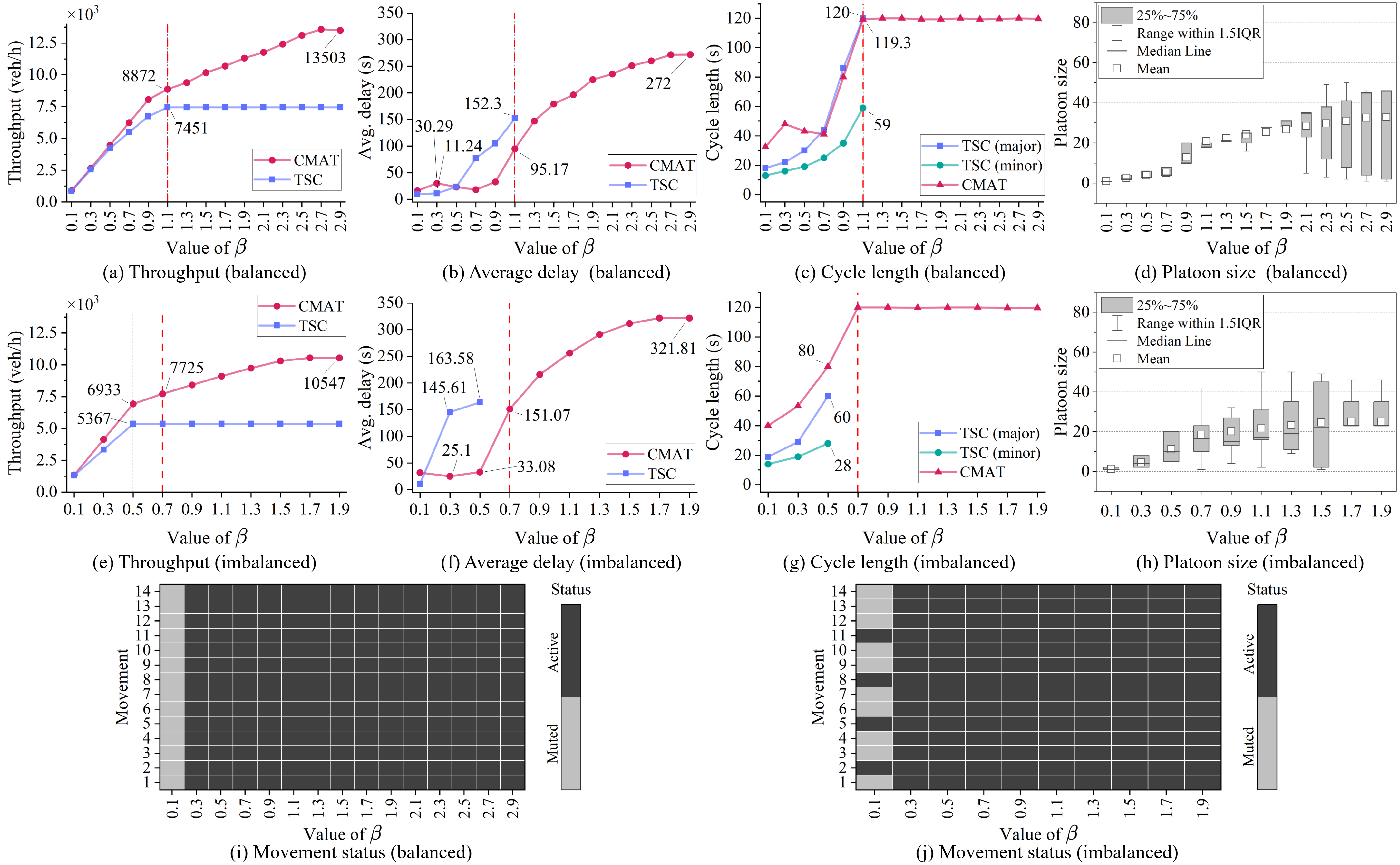}
    \caption{Results comparison in the connected intersections with shared lanes (the gray dashed line marks the point at which TSC is infeasible due to the oversaturation, while the red dashed line marks the point CMAT switches from model M1 to M2).}
    \label{fig:case4}
    \captionsetup{justification=centering}
\end{figure}

CMAT demonstrates strong effectiveness in coordinating connected intersections, achieving higher throughput and lower average delay than TSC in most cases. However, we observe instances where CMAT yields slightly lower throughput and higher delay than TSC (e.g., at $\beta=0.3$ in the balanced scenario and $\beta=0.1$ in the imbalanced scenario). As explained in the previous test case, this occurs when previously \emph{muted} movements become active once their arrival flow rate exceeds the threshold. For example, at $\beta=0.3$ under the balanced pattern, all movements are reactivated, leading to a sudden increase in cycle length. A similar transition occurs at $\beta=0.1$ under the imbalanced pattern, where the system shifts from partially muting movements to fully activating them.

CMAT demonstrates its effectiveness in coordinating connected intersections by achieving higher throughput and lower average delay compared to TSC. However, we observe a slightly smaller throughput and larger average delay in CMAT compared to TSC, e.g., $\beta=0.3$ in the balanced scenario and $\beta=0.1$ in the imbalanced case. As discussed in the previous test scenario, this happens when \emph{muted} movements begin to be active when their arrival flow rate reaches the threshold. In particular, when $\beta=0.3$ under the balanced demand pattern, all movements are re-activated and therefore create a sudden increase in the cycle length. Similarly, $\beta=0.1$ under the imbalanced scenario is the point transitioning from partially muting movements to fully activating all movements.

It is also worth noting that CMAT operates with longer cycle lengths in connected-intersection settings than in isolated intersections. This arises from the synchronization constraints: each isolated intersection’s cycle length must be a common multiple of the effective arrival headways, and coordinated operation requires the unified cycle to be a common multiple of the individual intersection cycles. As a result, the coordinated cycle cannot be shorter than any of the constituent cycles.

In summary, with more than an 80\% improvement in capacity and a 37.5\%–79.8\% reduction in average delay once TSC becomes oversaturated, CMAT exhibits strong capability for efficiently coordinating CAV traffic across connected intersections without requiring any manual offset tuning.

\section{Conclusions}\label{sec:conc}
This paper introduced Cyclic Modulation Control of Multi-Conflict Connected Automated Traffic (CMAT), a geometry-agnostic and demand-responsive framework designed to coordinate CAVs across arbitrary merging, diverging, and crossing conflict configurations. CMAT employs ``phantom'' micro-signals to induce the formation of CAV platoons in a demand-responsive manner, thereby shifting conflict resolution from individual vehicle slot negotiation to a platoon-based interaction paradigm, while minimizing the traffic delay. This transformation enables the average departure–arrival time headway across conflict points to approach the tighter car-following headway rather than the larger safety headways associated with crossing conflicts, which enhances overall capacity when necessary. CMAT introduces unified cycles that repeat over time at each conflict point, allowing platoons to traverse intersections at free-flow speed in a collision-free manner.

We formulate CMAT as a mixed-integer linear programming model based on a directed graph. CMAT is therefore geometry-agnostic, as the directed graph is abstracted from the physical intersection layout without imposing geometric constraints. Within this structure, essential geometric characteristics are preserved: nodes represent conflict points, and arcs denote the connections between these points and maintain the corresponding lengths of road segments. Theoretical analyses are provided to ensure that the proposed formulation admits feasible solutions under a wide spectrum of real traffic conditions, including oversaturation.

Extensive numerical experiments are carried out to validate the effectiveness of CMAT. The test scenarios range from a simple unsignalized single-conflict two-way crossing and a four-leg intersection with or without shared lanes, extending further to a connected-coordination setting where a four-leg intersection is linked with an asymmetric intersection. The state-of-the-art methods are included for comparison. Simulation results demonstrate that CMAT achieves the lowest average vehicle delay across all scenarios and varying traffic demand patterns. Comparatively, CMAT increases total capacity by more than 50\% and reduces average per-vehicle delay by over 50\% in most test settings.

Overall, CMAT establishes a scalable and theoretically grounded foundation for demand-responsive, high-throughput, and physically consistent CAV coordination. Future work will extend the framework to mixed traffic with high CAV penetration and examine robustness to sensing and communication imperfections. Additional directions include integrating pedestrians and cyclists, developing distributed and online solvers for network-scale deployment, and validating CMAT in digital-twin and field environments.

\bibliographystyle{elsarticle-harv}
\bibliography{cas-refs}






\newpage
\appendix






\end{document}